
\documentclass[final,leqno]{siamltex}


\usepackage{amsfonts}
\usepackage{amsmath}
\DeclareMathOperator*{\esssup}{ess\,sup}
\DeclareMathOperator*{\argmax}{arg\,max}
\newtheorem{assumption}[theorem]{Assumption}
\newtheorem{remark}[theorem]{Remark}



\title{Stochastic Control Representations for
Penalized Backward Stochastic Differential Equations\thanks{The work
is partially supported by a start-up research fund from King's
College London, and the Oxford-Man Institute, University of
Oxford.}}


\author{Gechun Liang\thanks{Department of Mathematics, King's College London, London, WC2R 2LS,
U.K.. Email adress: {\tt gechun.liang@kcl.ac.uk}.}}

\begin{document}

\maketitle

\begin{abstract}
This paper shows that penalized backward stochastic differential
equation (BSDE), which is often used to approximate and solve the
corresponding reflected BSDE, admits both optimal stopping
representation and optimal control representation. The new feature
of the optimal stopping representation is that the player is allowed
to stop at exogenous Poisson arrival times. The convergence rate of
the penalized BSDE then follows from the optimal stopping
representation. The paper then applies to two classes of equations,
namely multidimensional reflected BSDE and reflected BSDE with a
constraint on the hedging part, and gives stochastic control
representations for their corresponding penalized equations.
\end{abstract}

\begin{keywords}
Reflected BSDE, \and Penalized BSDE, \and Optimal stopping, \and
Optimal control, \and Optimal switching, \and Regime switching
\end{keywords}

\begin{AMS}
60H10, \and 60G40,  \and 93E20.
\end{AMS}

\pagestyle{myheadings} \thispagestyle{plain} \markboth{Gechun
Liang}{Stochastic Control Representations for Penalized BSDEs}

\section{Introduction}

El Karoui et al \cite{ElKaroui19971} introduced penalized backward
stochastic differential equation (\emph{penalized BSDE} for short)
to solve reflected backward stochastic differential equation
(\emph{reflected BSDE} for short), and they showed that the solution
of a reflected BSDE corresponds to the value of a nonlinear optimal
stopping time problem. In this paper, our main result is to show
that the solution of the associated penalized BSDE also corresponds
to the value of some nonlinear optimal stopping time problem, and
the parameter $\lambda$ appearing in the penalized equation is
nothing but the intensity of some exogenous Poisson process.

Let $(W_t)_{t\geq 0}$ be a $d$-dimensional standard Brownian motion
defined on a filtered probability space
$(\Omega,\mathcal{F},\mathbb{F}=\{\mathcal{F}_t\}_{t\geq
0},\mathbf{P})$ satisfying the \emph{usual conditions}, i.e. the
filtration $\mathbb{F}$ is right continuous and complete.
In El Karoui et al \cite{ElKaroui19971}, the authors introduced the
following reflected BSDE
\begin{equation}\label{RBSDE1}
Y_t=\xi+\int_t^{T}f_s(Y_s,Z_s)ds+\int_t^{T}dK_s-\int_t^{T}Z_sdW_s
\end{equation}
under the constraints
\begin{align*}
\text{(Dominating Condition)}:\ \ \ & Y_t\geq S_t\ \text{for}\ t\in[0,T],\\
\text{(Skorohod Condition)}:\ \ \ & \int_0^T(Y_t-S_t)dK_t=0\
\text{for}\ K\ \text{continuous\ and\ increasing},
\end{align*}
where the terminal data $\xi$, the driver $f_s(y,z)$, and the
obstacle $(S_t)_{0\leq t\leq T}$ are the given data for the
equation. A solution to the reflected BSDE (\ref{RBSDE1}) is a
triplet of $\mathbb{F}$-adapted processes $(Y,Z,K)$, where $Z$ is a
kind of \emph{hedging} process, and $K$ is a kind of \emph{local time}
process. The equation (\ref{RBSDE1}) corresponds to a backward
Skorohod problem, which in turn gives the \emph{local time} process $K$
a Skorohod representation. See Qian and Xu \cite{Qian} in this
direction.

On the other hand, as shown in \cite{ElKaroui19971}, (\ref{RBSDE1})
also has an interesting interpretation in the sense that its
solution is the value of a nonlinear optimal stopping time problem:
For any time $t\in[0,T]$, the value of the following optimal
stopping time problem
\begin{equation}\label{OptimalStopping1}
y_t=\esssup_{\tau\in\mathcal{R}(t)}\mathbf{E}\left[\int_t^{\tau\wedge
T}f_s(Y_s,Z_s)ds+S_{\tau}\mathbf{1}_{\{\tau<
T\}}+\xi\mathbf{1}_{\{\tau\geq T\}}|\mathcal{F}_t\right],
\end{equation}
where the control set $\mathcal{R}(t)$ is defined as
$$\mathcal{R}(t)=\{\mathbb{F}\text{-stopping\ time}\ \tau\ \text{for}\ t\leq \tau\leq T\},$$
is given by the solution to the reflected BSDE (\ref{RBSDE1}):
$y_t=Y_t$ $a.s.$. The optimal stopping time is given by
$\tau^*_t=\inf\{s\geq t: Y_s=S_s\}\wedge T$. The nonlinear optimal
stopping problem (\ref{OptimalStopping1}) is closely related to
pricing and hedging American options as shown in El Karoui et al
\cite{ElKaroui19972}.

One way to solve the reflected BSDE (\ref{RBSDE1}) is to iterate the
solution of the corresponding backward Skorohod problem by Picard
iteration. The other way, which seems more commonly used in the
literature, is to approximate the \emph{local time} process $K$ by
$$K^{\lambda}_t=\int_0^t\lambda\max\{0,S_s-Y^{\lambda}_s\}ds,$$
where $(Y^{\lambda},Z^{\lambda})$ is the solution of the following
penalized BSDE
\begin{equation}\label{penalizedBSDE1}
Y_t^{\lambda}=\xi+\int_t^{T}f_s(Y_s^{\lambda},Z_s^{\lambda})ds
+\int_t^T\lambda\max\{0,S_s-Y^{\lambda}_s\}ds-\int_t^{T}Z_s^{\lambda}dW_s.
\end{equation}
Under Assumption \ref{Assumption} introduced below, El Karoui et al
\cite{ElKaroui19971} proved that $Y^{\lambda}$ is increasing in
$\lambda$, and
\begin{equation}\label{Convergence}
\lim_{\lambda\uparrow\infty}\mathbf{E}\left[\sup_{t\in[0,T]}|Y_t^{\lambda}-Y_t|^2+\int_0^T|Z_t^{\lambda}-Z_t|^2dt+
\sup_{t\in[0,T]}|K_t^{\lambda}-K_t|^2\right]=0.
\end{equation}

Our aim is to give stochastic control representations for the
penalized BSDE (\ref{penalizedBSDE1}). Our main result is to prove
that the penalized BSDE (\ref{penalizedBSDE1}) also admits an
optimal stopping representation, which will in turn converge to the
original optimal stopping time problem (\ref{OptimalStopping1}) with convergence rate $\frac{1}{\lambda}$
(see (\ref{relation1}) and (\ref{relation2}) below).

We impose the following standard assumption on the data set
$(\xi,f,S)$ as in El Karoui \cite{ElKaroui19971}, so that both
(\ref{RBSDE1}) and (\ref{penalizedBSDE1}) admit unique solutions.
\begin{assumption}\label{Assumption}
\begin{itemize}
\item The terminal data $\xi$ is $\mathbb{L}^2$-square integrable:
$\mathbf{E}[|\xi|^2]<\infty;$
\item The driver $f:
\Omega\times[0,T]\times\mathbb{R}\times\mathbb{R}^d\rightarrow\mathbb{R}
$ is uniformly Lipschitz continuous:
$$|f_t(y,z)-f_t(\bar{y},\bar{z})|\leq C(|y-\bar{y}|+|z-\bar{z}|)\ a.s.\ \text{for\ some}\ C>0,$$
with $f_t(0,0)$ being $\mathbb{F}$-adapted and $\mathbb{H}^2$-square
integrable: $\mathbf{E}\left[\int_0^T|f_t(0,0)|^2dt\right]<\infty$;
\item The obstacle process $S$ is a continuous $\mathbb{F}$-adapted
process, and uniformly square integrable:
$\mathbf{E}\left[\sup_{t\in[0,T]}|S_t|^2\right]<\infty$.
\end{itemize}
\end{assumption}

In fact, the above conditions could be relaxed. See, for example,
Peng and Xu \cite{Xu} and Lepeltier and Xu \cite{Xu1} extending to
RCLL obstacles, and Kobylanski et al \cite{Kobylanski} and Bayraktar
and Song \cite{Bayraktar} among others extending to the driver
$f_s(y,z)$ with quadratic growth in $z$. However, we only stick with
the above standard assumption in this paper. Under the above
standard assumption, we have the following representation which is
the main result of this paper.

Let $\{T_n\}_{n\geq 0}$ be the arrival times of an independent
Poisson process with intensity $\lambda$ and minimal augmented
filtration $\{\mathcal{H}_t\}_{t\geq 0}$. Define
$\mathcal{G}_t=\mathcal{F}_t\vee\mathcal{H}_t$ and
$\mathbb{G}=\{\mathcal{G}_t\}_{t\geq 0}$. Since $T_0=0$ and
$T_{\infty}=\infty$, there exists an integer-valued random variable
$M<\infty$ such that $T_M\leq T< T_{M+1}$, i.e.
$M(\omega)=\sum_{n\geq 0}n\mathbf{1}_{\{T_n(\omega)\leq T<
T_{n+1}(\omega)\}}$.


\begin{theorem}\label{Theorem}Suppose that Assumption \ref{Assumption}
holds. Denote $(Y^{\lambda},Z^{\lambda})$ as the unique solution to
the penalized BSDE (\ref{penalizedBSDE1}). For any integer $i\geq
1$, define the control set $\mathcal{R}_{T_{i}}{(\lambda)}$ as
$$\mathcal{R}_{T_i}{(\lambda)}=\left\{\mathbb{G}\text{-stopping\ time}\ \tau\ \text{for}\ \tau(\omega)=T_N(\omega)
\ \text{where}\ i\leq N\leq M+1.\right\}$$
Then conditional on $\{T_{i-1}\leq t<T_{i}\}$, the value of the
following optimal stopping time problem
\begin{equation}\label{OptimalStopping2}
y_t^{\lambda}=\esssup_{\tau\in\mathcal{R}_{T_i}{(\lambda)}}\mathbf{E}\left[\int_t^{\tau\wedge
T}f_s(Y_s^{\lambda},Z_s^{\lambda})ds+S_{\tau}\mathbf{1}_{\{\tau<
T\}}+\xi\mathbf{1}_{\{\tau\geq T\}}|\mathcal{G}_t\right]
\end{equation}
is given by the solution to the penalized BSDE
(\ref{penalizedBSDE1}): $y_t^{\lambda}=Y_t^{\lambda}$ $a.s.$. The
optimal stopping time is given by $\tau^*_{T_i}=\inf\{T_{N}\geq T_i:
Y^{\lambda}_{T_{N}}\leq S_{T_{N}}\}\wedge T_{M+1}.$
\end{theorem}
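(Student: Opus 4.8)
The plan is to turn the penalized BSDE into a \emph{linear} BSDE whose classical representation can be re-read, via the memorylessness of the Poisson clock, as a one-step dynamic programming equation for the constrained stopping problem. First I would rewrite the penalty through the elementary identity $\lambda\max\{0,S_s-Y_s^{\lambda}\}=\lambda\max\{S_s,Y_s^{\lambda}\}-\lambda Y_s^{\lambda}$, so that the penalized BSDE (\ref{penalizedBSDE1}) becomes
\[
Y_t^{\lambda}=\xi+\int_t^{T}\big(g_s-\lambda Y_s^{\lambda}\big)\,ds-\int_t^{T}Z_s^{\lambda}\,dW_s,\qquad g_s:=f_s(Y_s^{\lambda},Z_s^{\lambda})+\lambda\max\{S_s,Y_s^{\lambda}\}.
\]
Treating $g$ as a given $\mathbb{H}^2$ process (it is, since $S$ and $Y^{\lambda}$ are square integrable by Assumption \ref{Assumption} and the well-posedness of (\ref{penalizedBSDE1})), the integrating factor $e^{-\lambda(s-t)}$ and the vanishing of the martingale term under $\mathbf{E}[\,\cdot\,|\mathcal{F}_t]$ yield
\[
Y_t^{\lambda}=\mathbf{E}\Big[e^{-\lambda(T-t)}\xi+\int_t^{T}e^{-\lambda(s-t)}f_s(Y_s^{\lambda},Z_s^{\lambda})\,ds+\int_t^{T}\lambda e^{-\lambda(s-t)}\max\{S_s,Y_s^{\lambda}\}\,ds\,\Big|\,\mathcal{F}_t\Big].
\]

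The crux is to reinterpret these deterministic discount weights through the first Poisson arrival $T_1$. Since $(N_s)_{s\geq t}$ is independent of $\mathbb{F}$, the shifted inter-arrival law gives $\mathbf{P}(T_1>s\mid\mathcal{F}_s)=e^{-\lambda(s-t)}$ and density $\lambda e^{-\lambda(s-t)}$ on $(t,\infty)$; a Fubini argument then identifies $e^{-\lambda(T-t)}\xi$ with $\mathbf{1}_{\{T_1>T\}}\xi$, the first integral with $\int_t^{T_1\wedge T}f_s\,ds$, and the last integral with $\mathbf{1}_{\{T_1\leq T\}}\max\{S_{T_1},Y_{T_1}^{\lambda}\}$, all under $\mathbf{E}[\,\cdot\,|\mathcal{G}_t^{(t,\lambda)}]$ (recall $\mathcal{G}_t^{(t,\lambda)}=\mathcal{F}_t$). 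This produces the one-step equation
\[
Y_t^{\lambda}=\mathbf{E}\Big[\int_t^{T_1\wedge T}f_s\,ds+\mathbf{1}_{\{T_1\leq T\}}\max\{S_{T_1},Y_{T_1}^{\lambda}\}+\mathbf{1}_{\{T_1>T\}}\xi\,\Big|\,\mathcal{G}_t^{(t,\lambda)}\Big].
\]
Because $Y^{\lambda}$ solves the same penalized BSDE on $[T_n,T]$ (flow property) and the inter-arrival times are i.i.d.\ $\mathrm{Exp}(\lambda)$ by the strong Markov property of $N$, the identical relation holds started from each arrival: for $0\leq n\leq M$,
\[
Y_{T_n}^{\lambda}=\mathbf{E}\Big[\int_{T_n}^{T_{n+1}\wedge T}f_s\,ds+\mathbf{1}_{\{T_{n+1}\leq T\}}\max\{S_{T_{n+1}},Y_{T_{n+1}}^{\lambda}\}+\mathbf{1}_{\{T_{n+1}>T\}}\xi\,\Big|\,\mathcal{G}_{T_n}^{(t,\lambda)}\Big].
\]

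With this discrete DPP in hand, I would run a Snell-envelope argument on the Poisson grid. Define, for $1\leq n\leq M$, the process $\widehat{X}_n=\int_t^{T_n\wedge T}f_s\,ds+\max\{S_{T_n},Y_{T_n}^{\lambda}\}$ and $\widehat{X}_{M+1}=\int_t^{T}f_s\,ds+\xi$, and the stopping reward $\Phi_n$ obtained by replacing $\max\{S_{T_n},Y_{T_n}^{\lambda}\}$ with $S_{T_n}$. Since $\max\{S,Y^{\lambda}\}\geq Y^{\lambda}$, the DPP shows $\widehat{X}$ is a $(\mathcal{G}_{T_n}^{(t,\lambda)})$-supermartingale, while $\max\{S,Y^{\lambda}\}\geq S$ shows it dominates $\Phi$. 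Optional sampling over any admissible index $\nu$ (i.e.\ $\tau=T_\nu\in\mathcal{R}_{T_1}(t,\lambda)$) gives $\mathbf{E}[\Phi_\nu\,|\,\mathcal{F}_t]\leq\mathbf{E}[\widehat{X}_1\,|\,\mathcal{F}_t]=Y_t^{\lambda}$ by the $n=0$ DPP, hence $y_t^{(t,\lambda)}\leq Y_t^{\lambda}$. For the reverse inequality and optimality, observe that $\tau^{*}_{T_1}$ stops exactly at the first arrival with $Y^{\lambda}\leq S$, so for all indices strictly before $\nu^{*}$ one has $\max\{S,Y^{\lambda}\}=Y^{\lambda}$ and the supermartingale inequality becomes an equality; thus $\widehat{X}_{\cdot\wedge\nu^{*}}$ is a martingale with $\widehat{X}_{\nu^{*}}=\Phi_{\nu^{*}}$, giving $\mathbf{E}[\Phi_{\nu^{*}}\,|\,\mathcal{F}_t]=Y_t^{\lambda}\leq y_t^{(t,\lambda)}$ and closing the loop.

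I expect the main obstacle to be the passage in the second paragraph: justifying rigorously the independence/Fubini reinterpretation that converts the purely $\mathbb{F}$-measurable Feynman--Kac formula into a genuine $\mathbb{G}^{(t,\lambda)}$-conditional dynamic programming identity, keeping careful track that $f_s,S_s,Y_s^{\lambda}$ are $\mathbb{F}$-progressive while $T_1$ is exogenous and independent. A secondary technical point is the optional-sampling and telescoping step over the \emph{random} horizon $M+1$: although $M<\infty$ almost surely, interchanging limits and expectations there must be controlled using the uniform $L^2$ bounds on $\xi$, $S$, $f(0,0)$ and the resulting $L^2$ estimates for $(Y^{\lambda},Z^{\lambda})$ guaranteed by Assumption \ref{Assumption}.
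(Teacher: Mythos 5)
Your proposal is correct and takes essentially the same route as the paper: your integrating-factor/exponential-density computation reproducing the one-step dynamic programming identity at the Poisson arrivals is exactly the paper's Lemma \ref{lemma0} (there phrased via the dual equation $\alpha_t=1-\int_0^t\lambda\alpha_s\,ds$ and the conditional density $\lambda e^{-\lambda(x-T_n)}$ of $T_{n+1}-T_n$), and your discrete Snell-envelope verification (supermartingale dominating the reward, martingale up to $\tau^*_{T_1}$) is the content of the paper's Lemma \ref{Lemma1} together with the two-inequality argument that follows it. The only difference is cosmetic but mildly streamlining: by freezing the driver at $(Y^{\lambda},Z^{\lambda})$ from the outset and deriving the DPP directly for $Y^{\lambda}$ itself, you make the paper's separate nonlinear step in Section \ref{sec2.2} (the linearized difference BSDE with coefficient $\beta_s$ identifying $\tilde{Y}^{\lambda}$ with $Y^{\lambda}$) unnecessary, since the integrating-factor computation never uses linearity of the driver.
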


Note that on $\{T_{i-1}<t<T_i\}$, there exists an
$\mathcal{F}_t$-measurable random variable $\tilde{y}^{\lambda}_t$
such that $\tilde{y}^{\lambda}_t=y^{\lambda}_t$, so $y_t^{\lambda}$
can also be regarded as $\mathcal{F}_t$-measurable in this
situation. On the other hand, the subscript $T_i$ in
$\mathcal{R}_{T_i}(\lambda)$ represents the smallest stopping time
that is allowed to choose, and $\lambda$ represents the intensity of
the underlying Poisson process.

There are two new features of the optimal stopping time problem
(\ref{OptimalStopping2}): First, there is a control constraint in
the sense that only stopping at Poisson arrival times is allowed;
Secondly, the player is not allowed to stop at the initial starting
time $t$. By the convergence (\ref{Convergence}) and Theorem
\ref{Theorem}, the values of the two optimal stopping time problems
(\ref{OptimalStopping1}) and (\ref{OptimalStopping2}) are related by
\begin{equation}\label{relation1}
\lim_{\lambda\uparrow\infty}\mathbf{E}\left[\sup_{t\in[0,T]}|y^{\lambda}_t-y_t|^2\right]=0.
\end{equation}
Moreover, by using the optimal stopping representation (\ref{OptimalStopping2}), we will further
establish the convergence rate of (\ref{relation1}) in Section
\ref{Sec_rate}.



The above optimal stopping with Poisson random intervention times
was firstly introduced by Dupuis and Wang \cite{Dupuis2002} (generalized by Lempa \cite{Lempa} recently), where
they used it to model perpetual American options in a Markovian
setting. Since the state space is one dimensional and the time
horizon is infinite, they did not even need to introduce any
penalized equation. Instead, they worked out two ordinary
differential equations (\emph{ODE} for short) defined in continuity
region and stopping region respectively. Recently, Liang et al
\cite{Liang} established a connection between such kind of optimal
stopping with Poisson random intervention times and dynamic bank run
problems. In a Markovian setting, Dai et al \cite{Dai} intuitively
showed that the penalty method for their optimal stopping time
problem is closely related to some intensity framework. However,
they did not introduce any stochastic control
interpretation for their penalty method.\\

The paper is organized as follows: Theorem \ref{Theorem} is proved
in Section \ref{Sec_Proof}. Then we provide four applications of the
optimal stopping representation (\ref{OptimalStopping2}) in the
following sections. In Section \ref{Sec_rate} we give the
convergence rate of the penalized BSDE (\ref{penalizedBSDE1}) in a
Markovian setting. We also give an optimal control representation
for (\ref{penalizedBSDE1}) in the sense of randomized stopping in
Section \ref{Sec_Control}. Then in Section \ref{Sec_Multi}, we apply
to multidimensional reflected (oblique) BSDE, and give two optimal
switching representations for the associated multidimensional
penalized BSDEs, one of which is closely related to  BSDE with
regime switching. In Section \ref{Sec_Z}, we apply to reflected BSDE
with a convex constraint on $Z$ (\emph{constrained reflected BSDE}
for short), and give an optimal control/optimal stopping
representation for the associated penalized BSDE. Finally, Section 7
concludes.

\section{Proof of Theorem \ref{Theorem}}\label{Sec_Proof}

The optimal stopping time problem (\ref{OptimalStopping2}) has a
constraint on its control set, i.e. the optimal stopping time must
be chosen from the arrival times $\{T_n\}_{n\geq 0}$ of the
underlying Poisson process. Given the arrival time $T_n$, by
defining pre-$T_n$ $\sigma$-field
$$\mathcal{G}_{T_n}=\left\{A\in\bigvee_{s\geq 0}\mathcal{G}_{s}:
A\cap\{T_n\leq s\}\in\mathcal{G}_s\ \text{for}\ s\geq 0\right\}$$
and denoting
$\tilde{\mathbb{G}}=\{\mathcal{G}_{T_n}\}_{n\geq 0}$,
it is obvious that the problem (\ref{OptimalStopping2}) is
equivalent to the following discrete optimal stopping time problem
(where the control constraint does not appear): Conditional on $\{T_{i-1}\leq t<T_i\}$,
\begin{equation}\label{OptimalStopping3}
y_{t}^{\lambda}=\esssup_{N\in\mathcal{N}_{i}(\lambda)}
\mathbf{E}\left[\int_{t}^{T_N\wedge
T}f_s(Y_s^{\lambda},Z_s^{\lambda})ds+S_{T_N}\mathbf{1}_{\{T_N<
T\}}+\xi\mathbf{1}_{\{T_N\geq
T\}}|\mathcal{G}_{t}\right],
\end{equation}
where
$$\mathcal{N}_{i}(\lambda)=\left\{\tilde{\mathbb{G}}\text{-stopping\ time}\ N\
\text{for}\ i\leq N\leq M+1.\right\}$$

Once again, the subscript $i$ in $\mathcal{N}_{i}(\lambda)$
represents the smallest stopping time that is allowed to choose, and
$\lambda$ represents the intensity of the underlying filtration
$\tilde{\mathbb{G}}$. Note that (\ref{OptimalStopping3}) is a
discrete optimal stopping problem, as the player is allowed to stop
at a sequence of integers $i,i+1,\dots,M+1$. The optimal stopping
time is then some integer-valued random variable $N^*_{i}$ such that
$N^*_{i}=\inf\{N\geq i: Y^{\lambda}_{T_N}\leq S_{T_N}\}\wedge
(M+1).$ In the following, we will work on the optimal stopping time
problem with the form (\ref{OptimalStopping3}).



\subsection{Representation for Linear Case}\label{subSec2.1}

In this section, we consider the case where the driver $f_s(y,z)$ is
independent of $(y,z)$, and simply write it as $f_s$ in such a
situation. Note that the corresponding reflected BSDE (\ref{RBSDE1})
becomes linear, and so is the optimal stopping representation
(\ref{OptimalStopping1}).

\begin{lemma}\label{lemma0}
Suppose that Assumption \ref{Assumption} holds, and that
$f_s(y,z)=f_s$. Then conditional on $\{T_{i-1}\leq t<T_{i}\}$, the
solution of the penalized BSDE (\ref{penalizedBSDE1}) is the unique
solution of the following recursive equation
\begin{equation}\label{DPEforBSDE1}
Y_{t}^{\lambda}=\mathbf{E}\left[\int_{t}^{T_{i}\wedge
T}f_sds+\max\left\{S_{T_{i}},Y^{\lambda}_{T_{i}}\right\}
\mathbf{1}_{\{T_{i}\leq T\}}+\xi\mathbf{1}_{\{T_{i}>
T\}}|\mathcal{G}_{t}\right].
\end{equation}
\end{lemma}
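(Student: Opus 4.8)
The plan is to prove Lemma \ref{lemma0} by establishing that the right-hand side of (\ref{DPEforBSDE1}) is a consistent recursive scheme that the penalized BSDE solution $Y^\lambda$ on Poisson arrival times must satisfy, exploiting the linearity of the driver $f_s$. The natural approach is to compute $Y^\lambda_{T_n}$ directly from the penalized BSDE (\ref{penalizedBSDE1}) by conditioning on the first Poisson jump after $T_n$, i.e.\ on $\mathcal{G}^{(t,\lambda)}_{T_n}$ together with the behaviour of the driving Poisson process on $[T_n,T_{n+1}]$.

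\emph{First}, I would write out the penalized BSDE between the two arrival times. Between $T_n$ and $T_{n+1}\wedge T$ the penalized BSDE reads
\begin{equation*}
Y^\lambda_{T_n}=Y^\lambda_{T_{n+1}\wedge T}+\int_{T_n}^{T_{n+1}\wedge T}f_s\,ds+\int_{T_n}^{T_{n+1}\wedge T}\lambda\max\{0,S_s-Y^\lambda_s\}\,ds-\int_{T_n}^{T_{n+1}\wedge T}Z^\lambda_s\,dW_s.
\end{equation*}
The point of the linear case is that $f_s$ does not depend on $(Y^\lambda,Z^\lambda)$, so the driver term is simply $\mathbf{E}[\int_{T_n}^{T_{n+1}\wedge T}f_s\,ds\mid\mathcal{G}^{(t,\lambda)}_{T_n}]$ once we take conditional expectation, the stochastic integral against $W$ vanishing in expectation. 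The core calculation is to show that the penalty term $\int_{T_n}^{T_{n+1}\wedge T}\lambda\max\{0,S_s-Y^\lambda_s\}\,ds$ is exactly what makes the terminal value at $T_{n+1}$ enter as $\max\{S_{T_{n+1}},Y^\lambda_{T_{n+1}}\}$ rather than $Y^\lambda_{T_{n+1}}$.

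\emph{The key idea}, and what I expect to be the main obstacle, is to interpret the penalty rate $\lambda$ as the intensity of the Poisson process and recognize the penalty integral as the compensator of a jump. Because $T_{n+1}-T_n$ is exponentially distributed with rate $\lambda$, conditional on $\mathcal{G}^{(t,\lambda)}_{T_n}$ the quantity $\lambda\,ds$ on $[T_n,T_{n+1}\wedge T]$ acts as the hazard of the next arrival; integrating $\lambda\max\{0,S_s-Y^\lambda_s\}$ against this hazard and combining with the continuation value $Y^\lambda_{T_{n+1}}$ collected at the jump should reassemble the maximum $\max\{S_{T_{n+1}},Y^\lambda_{T_{n+1}}\}$. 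Concretely, I would introduce the density of $T_{n+1}$ given $T_n$, write the conditional expectation of the terminal value plus the penalty as an integral over the arrival time of the next jump, and use the elementary identity $y+\lambda\int \max\{0,S-y\}$ (after the appropriate exponential weighting) $=\mathbf{E}[\max\{S,y\}]$ to collapse the two contributions. Care is needed here because $Y^\lambda_s$ and $S_s$ both move continuously over the interval while the jump intensity is constant, so the identification must be carried out rigorously via Fubini and the tower property rather than by the heuristic ``one jump'' picture.

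\emph{Finally}, uniqueness of the recursion (\ref{DPEforBSDE1}) is the easy part: since $M$ is a.s.\ finite and $Y^\lambda_{T_{M+1}\wedge T}$ is pinned down by the terminal condition $\xi$ on $\{T_{M+1}\geq T\}$, the equation (\ref{DPEforBSDE1}) determines $Y^\lambda_{T_n}$ backward from $n=M$ down to $n=0$ by a finite backward induction, each step being a conditional expectation of quantities already determined. Any two solutions of the recursion agreeing at the terminal index therefore coincide at every $T_n$, which gives uniqueness. The whole argument rests on the linearity assumption $f_s(y,z)=f_s$, which removes the implicit dependence of the driver on the unknown and lets the penalty term be evaluated as a pure compensator; this is precisely why this lemma is stated only for the linear case and will later be bootstrapped to the general Lipschitz driver by a fixed-point or comparison argument.
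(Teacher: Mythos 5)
Your proposal is correct, and it reaches (\ref{DPEforBSDE1}) by a genuinely different decomposition than the paper. The paper works on the whole remaining horizon $[T_n,T]$: it introduces the dual equation $\alpha_t=1-\int_0^t\lambda\alpha_s\,ds$, applies It\^o's formula to $\alpha_tY^{\lambda}_t$ (absorbing the penalty via $\lambda Y^{\lambda}_s+\lambda\max\{0,S_s-Y^{\lambda}_s\}=\lambda\max\{S_s,Y^{\lambda}_s\}$) to obtain the discounted representation (\ref{DPEforBSDE2}), and only afterwards reinterprets the discount factors through the conditional density $\lambda e^{-\lambda(s-T_n)}$ of $T_{n+1}-T_n$. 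You instead stop the BSDE at $T_{n+1}\wedge T$ and match the penalty integral with the jump payoff directly. Your route closes because, with $g_s=\max\{0,S_s-Y^{\lambda}_s\}$, one has $Y^{\lambda}_{T_{n+1}\wedge T}+g_{T_{n+1}}\mathbf{1}_{\{T_{n+1}<T\}}=\max\{S_{T_{n+1}},Y^{\lambda}_{T_{n+1}}\}\mathbf{1}_{\{T_{n+1}<T\}}+\xi\mathbf{1}_{\{T_{n+1}\geq T\}}$, so everything reduces to the compensator identity
\begin{equation*}
\mathbf{E}\left[\int_{T_n}^{T_{n+1}\wedge T}\lambda g_s\,ds\,\Big|\,\mathcal{G}^{(t,\lambda)}_{T_n}\right]
=\mathbf{E}\left[g_{T_{n+1}}\mathbf{1}_{\{T_{n+1}<T\}}\,\Big|\,\mathcal{G}^{(t,\lambda)}_{T_n}\right],
\end{equation*}
and both sides equal $\mathbf{E}[\int_{T_n}^{T}\lambda e^{-\lambda(s-T_n)}g_s\,ds\,|\,\mathcal{G}^{(t,\lambda)}_{T_n}]$ upon conditioning on $\mathcal{F}_T\vee\mathcal{G}^{(t,\lambda)}_{T_n}$ (under which $T_{n+1}-T_n$ remains $\mathrm{Exp}(\lambda)$, since the Poisson process is independent of $\mathbb{F}$) and applying Fubini --- exactly the density-plus-Fubini calculation you outline. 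Thus the two proofs run on the same engine, the conditional exponential law of $T_{n+1}-T_n$; yours dispenses with the dual ODE and It\^o's formula and makes the ``penalty $=$ hazard-rate compensator'' picture explicit, while the paper's detour produces the closed-form discounted expression (\ref{DPEforBSDE2}) as a useful intermediate object. Your uniqueness argument --- finite backward induction from $n=M$, where $\mathbf{1}_{\{T_{M+1}\geq T\}}=1$ pins the recursion to $\xi$ --- is the paper's.

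Two points you should state rather than gesture at. The vanishing of $\mathbf{E}[\int_{T_n}^{T_{n+1}\wedge T}Z^{\lambda}_s\,dW_s\,|\,\mathcal{G}^{(t,\lambda)}_{T_n}]$ requires that $W$ remain a Brownian motion in the enlarged filtration $\mathbb{G}^{(t,\lambda)}$ and that $T_{n+1}\wedge T$ be a bounded $\mathbb{G}^{(t,\lambda)}$-stopping time; both hold by independence of the Poisson process from $\mathbb{F}$ together with $Z^{\lambda}\in\mathbb{H}^2$, and the same independence is what licenses using the exponential conditional density against the $\mathbb{F}$-adapted integrand $g$. Also, your ``elementary identity'' $y+\lambda\int\max\{0,S-y\}=\mathbf{E}[\max\{S,y\}]$ is garbled as written; the correct statement is the display above, which your own plan (density of $T_{n+1}$, Fubini, tower property) proves. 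Finally, a small conceptual correction: linearity of the driver is not what makes this lemma work --- the identical computation goes through with $f_s$ replaced by the frozen process $f_s(Y^{\lambda}_s,Z^{\lambda}_s)$; the linear case is isolated for the sake of the optimal-stopping comparison that follows, and the paper passes to the nonlinear case exactly as you anticipate, via a uniqueness/comparison argument on the difference BSDE.
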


\begin{proof}
We introduce the dual equation for the penalized BSDE
(\ref{penalizedBSDE1}),
$$\alpha_t=1-\int_0^{t}\lambda\alpha_sds,\ \text{for}\ t\in[0,T].$$
Applying It\^o's formula to $\alpha_tY_t^{\lambda}$, we obtain
\begin{align*}
\alpha_{t}Y_{t}^{\lambda}=\alpha_TY_T^{\lambda}+\int_{t}^{T}\alpha_s\left(f_s+
\lambda\max\{S_s,Y_s^{\lambda}\}\right)ds-\int_{t}^{T}\alpha_sZ_s^{\lambda}dW_s,
\end{align*}
so that
\begin{align}\label{DPEforBSDE2}
Y_{t}^{\lambda}&=\frac{\alpha_T}{\alpha_{t}}\xi+\int_{t}^{T}\frac{\alpha_{s}}{\alpha_{T_n}}
\left(f_s+
\lambda\max\left\{S_{s},Y_{s}^{\lambda}\right\}\right)ds-\int_{t}^T\frac{\alpha_s}{\alpha_{t}}Z_s^{\lambda}dW_s\\
&=\mathbf{E}\left[e^{-\lambda(T-{t})}\xi+\int_{t}^Te^{-\lambda(s-T_n)}\left(f_s+
\lambda\max\left\{S_{s},Y_{s}^{\lambda}\right\}\right)ds
|\mathcal{F}_{t}\right].\nonumber
\end{align}

Next, conditional on $\{T_{i-1}<t<T_i\}$, we use the conditional density $\lambda e^{-\lambda
(x-t)}dx$ of $T_{i}-t$ to calculate (\ref{DPEforBSDE2}):
\begin{align*}
\mathbf{E}\left[\int_{t}^{T_{i}\wedge
T}f_sds|\mathcal{G}_{t}\right]
=&\ \mathbf{E}\left[\int_{t}^{T_{i}\wedge
T}f_sds|\mathcal{F}_{t}\right]\\
=&\
\mathbf{E}\left[e^{-\lambda(T-t)}\int_{t}^Tf_sds+\int_{t}^T\lambda
e^{-\lambda(x-t)}(\int_{t}^xf_udu)dx|\mathcal{F}_{t}\right]\\
=&\
\mathbf{E}\left[\int_{t}^Te^{-\lambda(s-t)}f_sds|\mathcal{F}_{t}\right],
\end{align*}
where we used integration by parts in the second equality.
Similarly, we have that
\begin{align*}
&\
\mathbf{E}\left[\max\left\{S_{T_{i}},Y_{T_{i}}^{\lambda}\right\}\mathbf{1}_{\{T_{i}\leq
T\}}+\xi\mathbf{1}_{\{T_{i}> T\}}|\mathcal{G}_{t}\right]\\
=&\ \mathbf{E}\left[\int_{t}^T\lambda
e^{-\lambda(s-{t})}\max\left\{S_{s},Y_{s}^{\lambda}\right\}ds
+e^{-\lambda(T-t)}\xi|\mathcal{F}_{t}\right].
\end{align*}
Hence, we obtain (\ref{DPEforBSDE1}) on $\{T_{i-1}<t<T_i\}$ by
plugging the above two expressions into (\ref{DPEforBSDE2}).

It is similar to obtain (\ref{DPEforBSDE1}) on $T_{i-1}$:
\begin{equation*}
Y_{T_{i-1}}^{\lambda}=\mathbf{E}\left[\int_{T_{i-1}}^{T_{i}\wedge
T}f_sds+\max\left\{S_{T_{i}},Y_{T_{i}}^{\lambda}\right\}\mathbf{1}_{\{T_{i}\leq
T\}}+\xi\mathbf{1}_{\{T_{i}> T\}}|\mathcal{G}_{T_{i-1}}\right].
\end{equation*}

Since the recursive equation (\ref{DPEforBSDE1}) obviously admits a
unique solution, $Y^{\lambda}_{t}$ is then the unique solution to
(\ref{DPEforBSDE1}).
\end{proof}

As a direct consequence of Lemma \ref{lemma0}, if we define
$\widehat{Y}^{\lambda}=\max\left\{S,Y^{\lambda}\right\}$, then
$\widehat{Y}^{\lambda}$ satisfies the following recursive equation: For $1\leq i\leq M+1$,
\begin{equation}\label{DPEFORBSDE3}
\widehat{Y}_{T_{i-1}}^{\lambda}=\max\left\{S_{T_{i-1}},
\mathbf{E}\left[\int_{T_{i-1}}^{T_{i}\wedge
T}f_sds+\widehat{Y}^{\lambda}_{T_{i}} \mathbf{1}_{\{T_{i}\leq
T\}}+\xi\mathbf{1}_{\{T_{i}>
T\}}|\mathcal{G}_{T_{i-1}}\right]\right\},
\end{equation}
which admits a unique solution, as we can calculate its solution
backwards in a recursive way.

In the following, we show that $\widehat{Y}^{\lambda}_{T_{i-1}}$ is the value
of another optimal stopping problem. Introduce an auxiliary optimal
stopping problem associated with (\ref{OptimalStopping3}):
\begin{equation}\label{OptimalStopping4}
\widehat{y}_{T_{i-1}}^{\lambda}=\esssup_{N\in{{\mathcal{N}}}_{i-1}(\lambda)}
\mathbf{E}\left[\int_{T_{i-1}}^{T_N\wedge
T}f_sds+S_{T_N}\mathbf{1}_{\{T_N< T\}}+\xi\mathbf{1}_{\{T_N\geq
T\}}|\mathcal{G}_{T_{i-1}}\right],
\end{equation}
where
$${{\mathcal{N}}}_{i-1}(\lambda)=\left\{\tilde{\mathbb{G}}\text{-stopping\ time}\ N\ \text{for}\
i-1\leq N\leq M+1.\right\}$$

The difference between (\ref{OptimalStopping4}) and
(\ref{OptimalStopping3}) starting from $T_{i-1}$ is that the former
is allowed to stop at the initial starting time $T_{i-1}$, while the
latter not.


\begin{lemma}\label{Lemma1} Suppose that Assumption \ref{Assumption} holds, and that
$f_s(y,z)=f_s$. For any integer $1\leq i\leq M+1$, the value
$\widehat{y}^{\lambda}_{T_{i-1}}$ of the auxiliary optimal stopping
time problem (\ref{OptimalStopping4}) satisfies the recursive
equation (\ref{DPEFORBSDE3}):
\begin{equation*}
\widehat{y}^{\lambda}_{T_{i-1}}=\max\left\{S_{T_{i-1}},\mathbf{E}
\left[\int_{T_{i-1}}^{T_{i}\wedge
T}f_sds+\widehat{y}^{\lambda}_{T_{i}}\mathbf{1}_{\{T_{i}< T\}}
+\xi\mathbf{1}_{\{T_{i}\geq
T\}}|\mathcal{G}_{T_{i-1}}\right] \right\}.
\end{equation*}
The optimal stopping time is given by
$\widehat{N}^*_{i-1}=\inf\{N\geq i-1:
\widehat{y}^{\lambda}_{T_N}\leq S_{T_N}\}\wedge (M+1).$ Hence,
$\widehat{Y}^{\lambda}_{T_{i-1}}=\widehat{y}^{\lambda}_{T_{i-1}}$
$a.s.$.
\end{lemma}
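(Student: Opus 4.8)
The plan is to prove Lemma \ref{Lemma1} by establishing the dynamic programming principle for the discrete optimal stopping problem (\ref{OptimalStopping4}) and then invoking the uniqueness of the solution to the recursive equation (\ref{DPEFORBSDE3}). Since both $\widehat{Y}^{\lambda}$ and $\widehat{y}^{(t,\lambda)}$ will be shown to satisfy the same recursion with the same terminal condition at $n=M+1$, the final identification $\widehat{Y}^{\lambda}_{T_n}=\widehat{y}^{(t,\lambda)}_{T_n}$ follows immediately from the uniqueness already noted after (\ref{DPEFORBSDE3}).

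First I would fix the terminal value. At $n=M+1$ we have $T_{M+1}\geq T$, so the only admissible stopping time in $\mathcal{N}_{M+1}(t,\lambda)$ is $N=M+1$ itself, and the payoff reduces to $\mathbf{E}[\xi|\mathcal{G}^{(t,\lambda)}_{T_{M+1}}]=\xi$; this anchors the backward recursion. Next I would prove the dynamic programming step for $0\leq n\leq M$. The key observation is that an admissible stopping time $N\in\mathcal{N}_n(t,\lambda)$ either stops immediately at $n$ (which is allowed, since $\mathcal{N}_n$ includes $N=n$) or stops at some $N\geq n+1$. On the event $\{N\geq n+1\}$, the restriction of $N$ to the filtration from $T_{n+1}$ onward is an admissible stopping time in $\mathcal{N}_{n+1}(t,\lambda)$. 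Conditioning on $\mathcal{G}^{(t,\lambda)}_{T_n}$ and using the tower property together with the strong Markov-type structure of the Poisson arrival times, the continuation value on $\{N\geq n+1\}$ is exactly $\mathbf{E}[\int_{T_n}^{T_{n+1}\wedge T}f_s\,ds+\widehat{y}^{(t,\lambda)}_{T_{n+1}}\mathbf{1}_{\{T_{n+1}<T\}}+\xi\mathbf{1}_{\{T_{n+1}\geq T\}}|\mathcal{G}^{(t,\lambda)}_{T_n}]$, while stopping at $n$ yields $S_{T_n}$. Taking the essential supremum over the two alternatives produces the $\max\{\cdot,\cdot\}$ structure of (\ref{DPEFORBSDE3}), and the optimal choice is to stop precisely when $S_{T_n}$ dominates the continuation value, giving the stated stopping rule $\widehat{N}^*_n=\inf\{N\geq n:\widehat{y}^{(t,\lambda)}_{T_N}\leq S_{T_N}\}\wedge(M+1)$.

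The main obstacle will be the careful measurability bookkeeping needed to justify the dynamic programming decomposition rigorously: one must verify that splitting an arbitrary $N\in\mathcal{N}_n$ according to $\{N=n\}$ versus $\{N\geq n+1\}$ is compatible with the $\tilde{\mathbb{G}}^{(t,\lambda)}$-stopping-time structure, that the shifted stopping time on $\{N\geq n+1\}$ is genuinely $\mathcal{N}_{n+1}$-admissible, and that the essential supremum interchanges correctly with the conditional expectation. In particular, the presence of the random horizon $M$ (itself depending on the Poisson path) requires that all the recursions terminate at the well-defined terminal index $M+1$; I would handle this by treating $M$ as fixed on each path and checking that the admissibility constraints $n\leq N\leq M+1$ are respected throughout the splitting. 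Once the DPP is established, the identification with $\widehat{Y}^{\lambda}$ is purely formal, since the recursion (\ref{DPEFORBSDE3}) was already shown to admit a unique solution computable backward from $n=M+1$.
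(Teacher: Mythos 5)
Your overall strategy---a backward dynamic programming principle for the discrete problem (\ref{OptimalStopping4}), followed by uniqueness of the recursion (\ref{DPEFORBSDE3}) to identify $\widehat{y}^{(t,\lambda)}$ with $\widehat{Y}^{\lambda}$---is exactly the paper's route. The only cosmetic difference is that the paper first absorbs the running cost by passing to $\bar{y}^{(t,\lambda)}_{T_n}=\widehat{y}^{(t,\lambda)}_{T_n}+\int_0^{T_n}f_s\,ds$, $\bar{S}_{T_n}=S_{T_n}+\int_0^{T_n}f_s\,ds$, $\bar{\xi}=\xi+\int_0^{T}f_s\,ds$, turning (\ref{OptimalStopping4}) into a pure stopping problem before performing the same splitting of $N\in\mathcal{N}_n(t,\lambda)$ over $\{N=n\}$ versus $\{n+1\leq N\leq M+1\}$; your direct treatment of the integral term is equivalent, since it telescopes under the tower property. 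Your anchoring at $n=M+1$ with value $\xi$ is consistent with the paper's treatment of the case $n=M$, because $\widehat{y}^{(t,\lambda)}_{T_{M+1}}$ only ever enters the recursion multiplied by $\mathbf{1}_{\{T_{M+1}<T\}}=0$. Likewise, the measurability and esssup-versus-conditional-expectation interchange issues you flag are genuine but standard (upward-directedness of the family of conditional payoffs); the paper itself glosses them in its chain of equalities, so on that point you are no less rigorous than the source.

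There is, however, one genuine gap: you assert that ``the optimal choice is to stop precisely when $S_{T_n}$ dominates the continuation value'' and read off $\widehat{N}^*_n$ from this, but the recursion only tells you where the pointwise maximum is achieved at each index; it does not by itself show that the hitting time $\widehat{N}^*_n=\inf\{N\geq n:\widehat{y}^{(t,\lambda)}_{T_N}\leq S_{T_N}\}\wedge(M+1)$ actually attains the essential supremum in (\ref{OptimalStopping4}), since an esssup over stopping times need not a priori be attained. The paper closes this with a separate computation that your plan omits: it verifies that the stopped value process $\widehat{y}^{(t,\lambda)}_{T_{m\wedge\widehat{N}^*_n}}$, $m\geq n$, is a $\tilde{\mathbb{G}}^{(t,\lambda)}$-martingale, by splitting on the events $\{\widehat{N}^*_n=j\}$ for $n\leq j\leq m$ and $\{\widehat{N}^*_n>m\}$ and using that on the latter event the recursion gives $\widehat{y}^{(t,\lambda)}_{T_m}=\mathbf{E}[\widehat{y}^{(t,\lambda)}_{T_{m+1}}|\mathcal{G}^{(t,\lambda)}_{T_m}]$; sampling this martingale at $\widehat{N}^*_n$, where the value equals the stopping payoff, then yields attainment. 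This verification step (or an equivalent backward-induction argument over the random horizon) needs to be added to make the optimality claim in the lemma rigorous; the rest of your plan matches the paper's proof.
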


\begin{proof} Define the following processes
\begin{align*}
\bar{y}^{\lambda}_{t}&=\widehat{y}^{\lambda}_{t}+\int_0^{t}f_sds;\\
\bar{S}_{t}&=S_{t}+\int_0^{t}f_sds;\\
\bar{\xi}&=\xi+\int_0^{T}f_sds.
\end{align*}
Since $T_{M}\leq T< T_{M+1}$,
the auxiliary optimal stopping problem (\ref{OptimalStopping4})
is equivalent to
\begin{align*}
\bar{y}_{T_{i-1}}^{\lambda}&=\esssup_{N\in{\mathcal{N}}_{i-1}(\lambda)}
\mathbf{E}\left[\bar{S}_{T_N}\mathbf{1}_{\{T_N<
T\}}+\bar{\xi}\mathbf{1}_{\{T_N\geq T\}}|\mathcal{G}_{T_{i-1}}\right]\\
&=\esssup_{N\in{\mathcal{N}}_{i-1}(\lambda)}
\mathbf{E}\left[\bar{S}_{T_N}\mathbf{1}_{\{i-1\leq N\leq
M\}}+\bar{\xi}\mathbf{1}_{\{N=M+1\}}|\mathcal{G}_{T_{i-1}}\right].
\end{align*}

We claim that
\begin{equation}\label{DPE2}
\left\{
\begin{array}{l}
\bar{y}_{T_M}^{\lambda}=\max\left\{\bar{S}_{T_M},
\mathbf{E}\left[\bar{\xi}|\mathcal{G}_{T_M}\right]\right\},\\[+0.2cm]
\bar{y}_{T_n}^{\lambda}=\max\left\{\bar{S}_{T_n},
\mathbf{E}\left[\bar{y}^{\lambda}_{T_{n+1}}|\mathcal{G}_{T_n}\right]\right\},
\ \ \ \text{for}\ i-1\leq n\leq M-1.
\end{array}%
\right.
\end{equation}%
If (\ref{DPE2}) holds, then
\begin{align*}
\bar{y}_{T_{i-1}}^{\lambda}&=\max\left\{\bar{S}_{T_{i-1}},
\mathbf{E}\left[\bar{y}_{T_{i}}^{\lambda}\mathbf{1}_{\{i\leq
M\}}+\bar{\xi}\mathbf{1}_{\{i>
M\}}|\mathcal{G}_{T_{i-1}}\right]\right\}\\
&=\max\left\{\bar{S}_{T_{i-1}},
\mathbf{E}\left[\bar{y}_{T_{i}}^{\lambda} \mathbf{1}_{\{T_{i}<
T\}}+\bar{\xi}\mathbf{1}_{\{T_{i}\geq
T\}}|\mathcal{G}_{T_{i-1}}\right]\right\},
\end{align*}
which is the recursive equation (\ref{DPEFORBSDE3}) if we express
the above equation in terms of $\widehat{y}^{\lambda}$, $S$ and
$\xi$.

Therefore, in order to complete the proof, we only need to show
(\ref{DPE2}). Indeed, for $n=M$,
\begin{align*}
\bar{y}_{T_M}^{\lambda}&=\esssup_{N\in{\mathcal{N}}_M(\lambda)}
\mathbf{E}\left[\bar{S}_{T_N}\mathbf{1}_{\{N= M\}}+
\bar{\xi}\mathbf{1}_{\{N=M+1\}}|\mathcal{G}_{T_M}\right]\\
&=\max\left\{\bar{S}_{T_M},\mathbf{E}\left[\bar{\xi}|\mathcal{G}_{T_M}\right]\right\}.
\end{align*}
In general, for $i-1\leq n\leq M-1$,
\begin{align*}
\bar{y}_{T_n}^{\lambda} &=\esssup_{N\in{\mathcal{N}}_n(\lambda)}
\mathbf{E}\left[\bar{S}_{T_N}\mathbf{1}_{\{n\leq N\leq
M\}}+\bar{\xi}\mathbf{1}_{\{N=M+1\}}|\mathcal{G}_{T_n}\right]\\
&=\esssup_{N\in{\mathcal{N}}_n(\lambda)}
\mathbf{E}\left[\mathbf{E}\left[\bar{S}_{T_N}\mathbf{1}_{\{n\leq
N\leq
M\}}+\bar{\xi}\mathbf{1}_{\{N=M+1\}}|\mathcal{G}_{T_{n+1}}\right]|\mathcal{G}_{T_n}\right]\\
&=\esssup_{N\in{\mathcal{N}}_n(\lambda)}\mathbf{E}\left[\bar{S}_{T_N}\mathbf{1}_{\{N=n\}}+
\mathbf{E}\left[\bar{S}_{T_N}\mathbf{1}_{\{n+1\leq N\leq
M\}}+\bar{\xi}\mathbf{1}_{\{N=M+1\}}|\mathcal{G}_{T_{n+1}}\right]|\mathcal{G}_{T_n}\right]\\
&=\max\left\{\bar{S}_{T_n},\mathbf{E}\left[\bar{y}^{\lambda}_{T_{n+1}}|\mathcal{G}_{T_n}\right]\right\}.
\end{align*}

Finally, we prove that $\widehat{N}^*_{i-1}$ is indeed the optimal
stopping time for the auxiliary optimal stopping problem
(\ref{OptimalStopping4}). For this, it suffices to show that
$\widehat{y}^{\lambda}_{T_{m\wedge \widehat{N}^*_{i-1}}}$ for $m\geq
i-1$ is a $\tilde{\mathbb{G}}$-martingale:
\begin{align*}
\mathbf{E}\left[\widehat{y}^{\lambda}_{T_{(m+1)\wedge\widehat{N}^*_{i-1}}}|\mathcal{G}_{T_m}\right]&
=\mathbf{E}\left[\left(\sum_{j=i-1}^m\mathbf{1}_{\{\widehat{N}_{i-1}^*=j\}}+\mathbf{1}_{\{\widehat{N}_{i-1}^*\geq
m+1\}}\right)\widehat{y}^{\lambda}_{T_{(m+1)\wedge\widehat{N}^*_{i-1}}}|\mathcal{G}_{T_m}\right]\\
&=\mathbf{E}\left[\sum_{j={i-1}}^m\mathbf{1}_{\{\widehat{N}_{i-1}^*=j\}}\widehat{y}_{T_{j}}^{\lambda}
+\mathbf{1}_{\{\widehat{N}_{i-1}^*>m\}}\widehat{y}^{\lambda}_{T_{m+1}}|\mathcal{G}_{T_m}\right]\\
&=\sum_{j=i-1}^m\mathbf{1}_{\{\widehat{N}_{i-1}^*=j\}}\widehat{y}_{T_{j}}^{\lambda}+
\mathbf{1}_{\{\widehat{N}_{i-1}^*>m\}}\mathbf{E}\left[\widehat{y}^{\lambda}_{T_{m+1}}|\mathcal{G}_{T_{m}}\right]\\
&=\sum_{j=i-1}^m\mathbf{1}_{\{\widehat{N}_{i-1}^*=j\}}\widehat{y}_{T_{j}}^{\lambda}+
\mathbf{1}_{\{\widehat{N}_{i-1}^*>m\}}\widehat{y}^{\lambda}_{T_m}=\widehat{y}^{\lambda}_{T_{m\wedge
\widehat{N}_{i-1}^*}},
\end{align*}
where we used the definition of $\widehat{N}^*_{i-1}$ is the second
last equality, and the proof is complete.
\end{proof}

We are now in a position to prove the linear situation of Theorem
\ref{Theorem}. From Lemma \ref{lemma0} and the definition of
$\widehat{Y}^{\lambda}$, conditional on $\{T_{i-1}\leq t< T_i\}$,
$$Y_{t}^{\lambda}=\mathbf{E}\left[\int_{t}^{T_{i}\wedge
T}f_sds+\widehat{Y}^{\lambda}_{T_{i}}\mathbf{1}_{\{T_{i}<
T\}}+\xi\mathbf{1}_{\{T_{i}\geq T\}}|\mathcal{G}_{t}\right].$$
Thanks to Lemma \ref{Lemma1},
$\widehat{Y}^{\lambda}_{T_{i}}=\widehat{y}^{\lambda}_{T_{i}}$, which
is the value of the auxiliary optimal stopping problem
(\ref{OptimalStopping4}) starting from $T_{i}$. Hence, for any
$\tilde{\mathbb{G}}$-stopping time $N\in{\mathcal{N}}_{i}(\lambda)$,
\begin{align*}
Y_{t}^{\lambda}&=\mathbf{E}\left[\int_{t}^{T_{i}\wedge
T}f_sds+\widehat{y}^{\lambda}_{T_{i}}\mathbf{1}_{\{T_{i}<
T\}}+\xi\mathbf{1}_{\{T_{i}\geq T\}}|\mathcal{G}_{t}\right]\\
&\geq \mathbf{E}\left[\int_{t}^{T_{i}\wedge T}f_sds+
\mathbf{E}\left[\int_{T_{i}}^{T_N\wedge
T}f_sds+S_{T_{N}}\mathbf{1}_{\{T_N< T\}}+\xi\mathbf{1}_{\{T_{N}\geq
T\}}|\mathcal{G}_{T_{i}}\right]
\mathbf{1}_{\{T_{i}<T\}}\right.\\
&\ \ \ \ \ \ \left.+\ \xi\mathbf{1}_{\{T_{i}\geq T\}}|\mathcal{G}_{t}\right]\\[+0.2cm]
&=\mathbf{E}\left[\int_{t}^{T_{i}\wedge
T}f_sds+\left(\int_{T_{i}}^{T_N\wedge
T}f_sds\right)\mathbf{1}_{\{T_{i}<T\}}+S_{T_N}\mathbf{1}_{\{T_N<T,T_{i}<T\}}\right.\\
&\ \ \ \ \ \ \left.+\ \xi\left(\mathbf{1}_{\{T_{N}\geq T,\
T_{i}<T\}}+\mathbf{1}_{\{T_{i}\geq
T\}}\right) |\mathcal{G}_{t}\right]\\
&=\mathbf{E}\left[\int_{t}^{T_N\wedge
T}f_sds+S_{T_N}\mathbf{1}_{\{T_N< T\}}+\xi\mathbf{1}_{\{T_N\geq
T\}}|\mathcal{G}_{t}\right],
\end{align*}
where we used the fact that $\{T_{i}\geq T\}\subset\{T_{N}\geq T\}$
in the last two equalities. By taking the supremum over
$N\in\mathcal{N}_{i}(\lambda)$, we obtain that $Y_{t}^{\lambda}\geq
y_{t}^{\lambda}$.

We now choose $N=\widehat{N}^*_{i}$, where $\widehat{N}^*_{i}$ is
the optimal stopping time for $\widehat{y}_{T_{i}}^{\lambda}$ given
in Lemma \ref{Lemma1}, to get the reverse inequality. Indeed,
\begin{align*}
Y_{t}^{\lambda}&=\mathbf{E}\left[\int_{t}^{T_{i}\wedge
T}f_sds+\widehat{y}^{\lambda}_{T_{i}}\mathbf{1}_{\{T_{i}<
T\}}+\xi\mathbf{1}_{\{T_{i}\geq T\}}|\mathcal{G}_{t}\right]\\
&=\mathbf{E}\left[\int_{{t}}^{T_{i}\wedge T}f_sds\right.+
\mathbf{E}\left[\int_{T_{i}}^{T_{\widehat{N}^*_{i}}\wedge
T}f_sds+S_{T_{\widehat{N}^*_{i}}}\mathbf{1}_{\{T_{\widehat{N}^*_{i}}<
T\}}+\ \xi\mathbf{1}_{\{T_{\widehat{N}^*_{i}}\geq
T\}}|\mathcal{G}_{T_{i}}\right] \mathbf{1}_{\{T_{i}<
T\}}\\
&\ \ \ \ \ \ \left.+\ \xi\mathbf{1}_{\{T_{i}\geq
T\}}|\mathcal{G}_{t}\right]\\[+0.2cm]
&=\mathbf{E}\left[\int_{t}^{T_{i}\wedge
T}f_sds+\left(\int_{T_{i}}^{T_{\widehat{N}^*_{i}}\wedge
T}f_sds\right)\mathbf{1}_{\{T_{i}<T\}}+S_{T_{\widehat{N}^*_{i}}}\mathbf{1}_
{\{T_{\widehat{N}^*_{i}}<T,T_{i}<T\}}\right.\\
&\ \ \ \ \ \ \left.+\
\xi\left(\mathbf{1}_{\{T_{\widehat{N}^*_{i}}\geq T,\
T_{i}<T\}}+\mathbf{1}_{\{T_{i}\geq
T\}}\right) |\mathcal{G}_{t}\right]\\
&=\mathbf{E}\left[\int_{t}^{T_{\widehat{N}_{i}^*}\wedge
T}f_sds+S_{T_{\widehat{N}_{i}^*}}\mathbf{1}_{\{T_{\widehat{N}_{i}^*}<
T\}}+\xi\mathbf{1}_{\{T_{\widehat{N}^*_{i}}\geq
T\}}|\mathcal{G}_{t}\right]\leq y_{t}^{\lambda}.
\end{align*}
Hence, $Y_{t}^{\lambda}=y_{t}^{\lambda}$, and the optimal stopping
time is $\widehat{N}^*_{i}$, which is just $N^*_{i}$ defined at the
beginning of Section \ref{Sec_Proof},
\begin{align*}
\widehat{N}_{i}^*&=\inf\{N\geq i: \widehat{y}^{\lambda}_{T_N}\leq
S_{T_N}\}\wedge
(M+1)\\
&=\inf\{N\geq i: \widehat{Y}^{\lambda}_{T_N}\leq S_{T_N}\}\wedge
(M+1)\\
&=\inf\{N\geq i: Y^{\lambda}_{T_N}\leq S_{T_N}\}\wedge
(M+1)=N^*_{i}.
\end{align*}

\subsection{Representation for Nonlinear Case}\label{sec2.2}

In this section, we extend the optimal stopping representation to the
nonlinear case, and complete the proof of Theorem \ref{Theorem}.

Denote $(Y^{\lambda},Z^{\lambda})$ as the unique solution to the
penalized BSDE (\ref{penalizedBSDE1}). Consider the optimal stopping
time problem (\ref{OptimalStopping3}) conditional on $\{T_{i-1}\leq
t<T_i\}$:
\begin{equation*}
y_t^{\lambda}=\esssup_{N\in\mathcal{N}_{i}(\lambda)}
\mathbf{E}\left[\int_t^{T_N\wedge
T}f_s(Y_s^{\lambda},Z_s^{\lambda})ds+S_{T_N}\mathbf{1}_{\{T_N<
T\}}+\xi\mathbf{1}_{\{T_N\geq T\}}|\mathcal{G}_t\right].
\end{equation*}
From Section \ref{subSec2.1}, $y_t^{\lambda}=\tilde{Y}^{\lambda}_t$
admits the following BSDE representation
\begin{equation*}
\tilde{Y}^{\lambda}_t=\xi+\int_t^{T}f_s(Y_s^{\lambda},Z_s^{\lambda})ds
+\int_t^T\lambda\max\{0,S_s-\tilde{Y}^{\lambda}_s\}ds-\int_t^{T}\tilde{Z}^{\lambda}_tdW_s.
\end{equation*}
On the other hand, $(Y^{\lambda},Z^{\lambda})$ satisfies the
penalized BSDE (\ref{penalizedBSDE1})
\begin{equation*}
Y_t^{\lambda}=\xi+\int_t^{T}f_s(Y_s^{\lambda},Z_s^{\lambda})ds
+\int_t^T\lambda\max\{0,S_s-Y^{\lambda}_s\}ds-\int_t^{T}Z_s^{\lambda}dW_s.
\end{equation*}
Define
$$\delta Y_t^{\lambda}=\tilde{Y}^{\lambda}_t-Y_t^{\lambda};\
\delta Z_t^{\lambda}=\tilde{Z}^{\lambda}_t-Z_t^{\lambda}.$$ Then
$(\delta Y^{\lambda},\delta Z^{\lambda})$ satisfies the following
linear BSDE
\begin{equation}\label{differenceBSDE}
\delta Y_t^{\lambda}=\int_t^{T}\lambda\beta_s\delta
Y_s^{\lambda}ds-\int_t^T\delta Z^{\lambda}_sdW_s
\end{equation}
with
$$\beta_s=\frac{\max\{0,S_s-\tilde{Y}_s^{\lambda}\}-\max\{0,S_s-Y_s^{\lambda}\}}
{\delta Y_s^{\lambda}}\times\mathbf{1}_{\{\delta Y_s^{\lambda}\neq
0\}}.$$ Obviously, $|\beta_s|\leq 1$, so BSDE (\ref{differenceBSDE})
admits a unique solution (see for example \cite{ElKaroui19973} for
the proof). On the other hand, $\delta Y_t^{\lambda}=\delta
Z_t^{\lambda}=0$ is one obvious solution to BSDE
(\ref{differenceBSDE}). Therefore, we conclude that
$\tilde{Y}^{\lambda}_t=Y_t^{\lambda}$ $a.s.$, which
proves Theorem \ref{Theorem}.

We conclude this section by reformulating the optimal stopping representation
(\ref{OptimalStopping2}) as the following remark, which will be used in Section
\ref{Sec_rate}.


\begin{remark}\label{remark}
Suppose that Assumption \ref{Assumption} holds. Then for any integer
$i\geq 1$, conditional on $\{T_{i-1}\wedge T\leq t<T_{i}\wedge T\}$,
the solution to the penalized BSDE (\ref{penalizedBSDE1}) is the
value of the optimal stopping time (\ref{OptimalStopping2}):
$Y_t^{\lambda}=y_t^{\lambda}$ $a.s.$. Moreover, the value
$y_t^{\lambda}$ satisfies the recursive equation:
\begin{align*}\label{DPEforBSDE11}
y_{t}^{\lambda}&=\mathbf{E}\left[\int_{t}^{T_{i}\wedge
T}f_s(Y_s^{\lambda},Z_s^{\lambda})ds+\max\left\{S_{T_{i}},y^{\lambda}_{T_{i}}\right\}
\mathbf{1}_{\{T_{i}\leq T\}}+\xi\mathbf{1}_{\{T_{i}>
T\}}|\mathcal{G}_{t}\right]\notag\\
&=\mathbf{E}\left[\int_{t}^{T_{i}\wedge
T}f_s(Y_s^{\lambda},Z_s^{\lambda})ds+\max\left\{S_{T_{i}},y^{\lambda}_{T_{i}}\right\}
\mathbf{1}_{\{T_{i}\leq T\}}+\xi\mathbf{1}_{\{T_{i}>
T\}}|\mathcal{F}_{t}\right].
\end{align*}
\end{remark}


\section{Application I: Convergence Rate of Penalized BSDE}
\label{Sec_rate}

The penalization method only provides the convergence of the
solution $(Y^{\lambda},Z^{\lambda},K^{\lambda})$ of the penalized
BSDE (\ref{penalizedBSDE1}) to the solution $(Y,Z,K)$ of the
reflected BSDE (\ref{RBSDE1}), but without any convergence rate,
because the proof of the convergence is based on compactness
arguments. What is even worse is that the penalized BSDE
(\ref{penalizedBSDE1}) does not provide an efficient numerical
algorithm, as the Lipschitz constant of the driver depends on
$\lambda$ which will explode when $\lambda\uparrow\infty$. Actually,
it is still an open question on how to numerically approximate the
corresponding penalized BSDE (\ref{penalizedBSDE1}) with an even
fixed (but large) intensity $\lambda$ (see Page 26 in
\cite{Chassagneux1}).

Thanks to our optimal stopping representation, the penalized BSDE
(\ref{penalizedBSDE1}) is nothing but a random time discretization
of the optimal stopping representation for the corresponding
reflected BSDE (\ref{RBSDE1}), where the time is discretized by
Poisson arrival times. On the other hand, it has been known the
convergence rate of the fixed time discretization of the optimal
stopping representation for (\ref{RBSDE1}), so called the Bermudan
approximation in \cite{Chassagneux0} and
\cite{Ma}. Hence, it is plausible to obtain the convergence rate of
the penalized BSDE (\ref{penalizedBSDE1}), or equivalently, the
convergence rate of the optimal stopping representation
(\ref{OptimalStopping2}).

\begin{assumption}\label{Assumption1}
\begin{itemize}
\item The terminal data $\xi$, the driver $f_s(y,z)$ and
the obstacle $S$ satisfy Assumption \ref{Assumption};
\item Moreover, the driver
$f_s(y,z)=f(X_s,y,z)$, the terminal date $\xi=g(X_T)$ for $g(\cdot)$
being Lipschitz continuous, and the obstacle process $S_s=l(X_s)$
for $l(\cdot)\in C^2$, where $X$ is a diffusion process with enough
regality.
\end{itemize}
\end{assumption}

We refer to \cite{Chassagneux0,Chassagneux1,Ma} for more detail
assumptions on the diffusion $X$. In the following, we improve the
convergence (\ref{relation1}) by giving its convergence rate.

\begin{proposition} Suppose that Assumption
\ref{Assumption1} holds. Then for any integer $M\geq 1$, the value
of the optimal stopping time problem (\ref{OptimalStopping2}) will
converge to the value of (\ref{OptimalStopping1}) with the following
rate:
\begin{equation}\label{relation2}
\mathbf{E}\left[\sup_{t\in[0,T_M\wedge
T]}\mathbf{E}\left[|y_t^{\lambda}-y_t|^2\right]\right]\leq
\frac{C}{\lambda}.
\end{equation}
for some constant $C$.
\end{proposition}

\begin{proof} For any $M\geq 1$,
Theorem \ref{Theorem} and Remark \ref{remark} imply that
$$y_t^{\lambda}=\mathbf{E}\left[\int_{t}^{T_{i}\wedge
T}f_s(Y_s^{\lambda},Z_s^{\lambda})ds+
\hat{y}^{\lambda}_{T_i}\mathbf{1}_{\{T_{i}\leq
T\}}+g(X_T)\mathbf{1}_{\{T_{i}> T\}}|\mathcal{F}_{t}\right]$$
conditional on $t\in[T_{i-1}\wedge T,T_i\wedge T)$, where
$\hat{y}^{\lambda}_{T_i}=\max\left\{l(X_{T_{i}}),y^{\lambda}_{T_{i}}\right\}$
for $1\leq i\leq M$. This is exactly the Bermudan approximation of
the optimal stopping time problem (\ref{OptimalStopping1}) if we
condition on $\vee_{t\geq 0}\mathcal{H}_t$. Hence, by a similar
argument as in Proposition 3.1 of \cite{Chassagneux1} (see also
Section 4 of \cite{Chassagneux0} and Section 3 of \cite{Ma}),
conditional on $\vee_{t\geq 0}\mathcal{H}_t$, we obtain that
\begin{equation}\label{inter_step}
\sup_{t\in[0,T_M\wedge
T]}\mathbf{E}\left[|y_t^{\lambda}-y_t|^2\right]\leq\max_{1\leq i\leq
M}(T_{i}\wedge T-T_{i-1}\wedge T)\leq\max_{1\leq i\leq
M}(T_{i}-T_{i-1}) ,
\end{equation}
and moreover,
\begin{align*}
\mathbf{E}\left[\sup_{t\in[0,T_M\wedge
T]}\mathbf{E}\left[|y_t^{\lambda}-y_t|^2\right]\right]&=\mathbf{E}\left[\mathbf{E}\left[\left.\sup_{t\in[0,T_M\wedge
T]}\mathbf{E}\left[|y_t^{\lambda}-y_t|^2\right]\right|\vee_{t\geq
0}\mathcal{H}_t\right]\right]\\
&\leq \mathbf{E}\left[\mathbf{E}\left[\left.\max_{1\leq i\leq
M}(T_{i}-T_{i-1})\right|\vee_{t\geq 0}\mathcal{H}_t\right]\right]\\
&=\mathbf{E}\left[\max_{1\leq i\leq M}(T_{i}-T_{i-1})\right].
\end{align*}
The conclusion then follows by observing that $(T_{i-1}-T_i)$ is
exponentially distributed with parameter $\lambda$ and that
\begin{align*}
\mathbf{E}\left[\max_{1\leq i\leq M}(T_{i}-T_{i-1})\right]&=
\int_0^{\infty}\mathbf{P}(\max_{1\leq i\leq M}(T_{i}-T_{i-1})>x)dx\\
&=\int_0^{\infty}(1-(1-e^{-\lambda x})^M)dx\\
&=\frac{1}{\lambda}\int_0^1\frac{1-u^M}{1-u}du=\frac{1}{\lambda}\sum_{i=1}^M\left(\frac{1}{i}\right).
\end{align*}
\end{proof}

\begin{remark}
Thanks to the optimal stopping representation (\ref{OptimalStopping2}), it is also possible
to obtain a numerical algorithm to solve the penalized BSDE
(\ref{penalizedBSDE1}), where the parameter $\lambda$ is hidden in
the Poisson arrival times $\{T_i\}_{i\geq 1}$, and we only need to
numerically solve the BSDE with the standard driver $f(x,y,z)$
instead of $f(x,y,z)+\lambda\max\{0,l(x)-y\}$:
\begin{align*}
Y_{t}^{\lambda}=&\
\max\left\{l(X_{T_{i}}),Y^{\lambda}_{T_{i}}\right\}
\mathbf{1}_{\{T_{i}\leq T\}}+g(X_T)\mathbf{1}_{\{T_{i}> T\}}\\
&\ + \int_{t}^{T_{i}\wedge
T}f(X_s,Y_s^{\lambda},Z_s^{\lambda})ds-\int_t^{T_i\wedge
T}Z_s^{\lambda}dW_s
\end{align*}
on $\{T_{i-1}\leq t<T_i\}$. Since the numerical approximation is of
independent interest, we will leave it for future research.
\end{remark}

\section{Application II: Randomized Stopping and Optimal Control Representation}
\label{Sec_Control}

Krylov in \cite{Krylov} showed that optimal stopping for controlled
diffusion processes can always be transformed to optimal control by
using randomized stopping. See also Gy\"ongy and Siska \cite{Gyongy}
for its recent development. In this section, our aim is to give
optimal control interpretations of both the reflected BSDE
(\ref{RBSDE1}) and the penalized BSDE (\ref{penalizedBSDE1}).

Let us first recall the basic idea of Krylov's randomized stopping.
For simplicity, we only consider the linear case $f_s(y,z)=f_s$. For
any fixed time $t\in[0,T]$, consider a nonnegative control process
$(r_s)_{s\geq t}$. Let the payoff functional
$\int_t^{\cdot}f_sds+S_{\cdot}$ stop with intensity $r_s\Delta$ in
an infinitesimal interval $(s,s+\Delta)$. Then the probability that
stopping does not occur before time $s$ is
$$e^{-\int_t^{s}r_udu}.$$ The probability that stopping does not occur
before time $s$ and does occur in the infinitesimal interval
$(s,s+{\Delta})$ is $$e^{-\int_t^{s}r_udu}r_s\Delta.$$ Therefore,
the payoff functional associated with the control process $r$ from
$[t,T]$ is given by
\begin{equation*}
\int_t^{T}(\int_t^{s}f_udu+S_s)e^{-\int_t^{s}r_udu}r_sds+(\int_t^{T}f_udu+\xi)
e^{-\int_t^Tr_udu},
\end{equation*}
where the first term is the payoff if stopping does occur before
time $T$, and the second term corresponds to the payoff if stopping
does not occur in the time interval $[t,T]$. By applying integration
by parts, the payoff functional is further simplified to
$$\int_t^{T}(f_s+r_sS_s)e^{-\int_t^{s}r_udu}+e^{-\int_t^{T}r_udu}\xi.$$

We have the following optimal control representation for the
penalized BSDE (\ref{penalizedBSDE1}):

\begin{proposition}
Suppose that Assumption \ref{Assumption} holds. Denote
$(Y^{\lambda},Z^{\lambda})$ as the unique solution to the penalized
BSDE (\ref{penalizedBSDE1}). For any fixed time $t\in[0,T]$, define
the control set $\mathcal{A}(t,\lambda)$ as
$$\mathcal{A}(t,\lambda)=
\left\{\mathbb{F}\text{-adapted process\ }(r_s)_{s\geq t}:\ r_s=0\
\text{or}\ \lambda\right\}.$$ Then the value of the following
optimal control problem
\begin{equation}\label{optimalcontrol}
y_t^{\lambda}=\esssup_{r\in\mathcal{A}(t,\lambda)}\mathbf{E}
\left[\int_t^{T}(f_s(Y_s^{\lambda},Z_s^{\lambda})+r_sS_s)e^{-\int_t^{s}r_udu}ds+e^{-\int_t^{T}r_udu}\xi|\mathcal{F}_t\right]
\end{equation}
is given by the solution to the penalized BSDE
(\ref{penalizedBSDE1}): $y_t^{\lambda}=Y_t^{\lambda}$ $a.s.$ for
$t\in[0,T]$. The optimal control is given by
$r_s^{*}=\lambda\mathbf{1}_{\{Y_s^{\lambda}\leq S_s\}}$ for $s\geq t$.
\end{proposition}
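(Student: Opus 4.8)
The plan is to show that the optimal control problem \eqref{optimalcontrol} has value equal to the solution $Y^\lambda$ of the penalized BSDE by establishing a BSDE representation of the value function and invoking the uniqueness argument already used for Theorem \ref{Theorem}. First I would restrict to the linear case $f_s(y,z)=f_s$, as the randomized-stopping heuristic preceding the proposition was set up there; the nonlinear case will then follow by freezing $(Y^\lambda,Z^\lambda)$ inside the driver and running the same linear-BSDE difference argument as in Section \ref{sec2.2}. For a fixed control $r\in\mathcal{A}(t,\lambda)$, the payoff functional
$$J_t(r)=\mathbf{E}\left[\int_t^{T}(f_s+r_sS_s)e^{-\int_t^{s}r_udu}ds+e^{-\int_t^{T}r_udu}\xi\Big|\mathcal{F}_t\right]$$
should be recognized as the solution at time $t$ of a \emph{linear} BSDE with discounting rate $r_s$, namely the equation $-dY^r_s=(f_s+r_sS_s-r_sY^r_s)ds-Z^r_sdW_s$ with terminal value $\xi$. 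This is the standard linear-BSDE representation (El Karoui et al \cite{ElKaroui19973}) obtained by applying It\^o's formula to $e^{-\int_t^{s}r_udu}Y^r_s$ and taking conditional expectation, which exactly reproduces the integration-by-parts computation sketched before the proposition.

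Next I would identify the value $y^\lambda_t=\esssup_{r}J_t(r)$ with the solution of the BSDE obtained by pointwise maximizing the driver over the admissible controls $r_s\in\{0,\lambda\}$. Since the only $r$-dependent part of the linear driver is $r_s(S_s-Y^r_s)$, the pointwise optimization gives
$$\sup_{r\in\{0,\lambda\}}r(S_s-Y_s)=\lambda\max\{0,S_s-Y_s\},$$
so the candidate value process solves precisely the penalized BSDE \eqref{penalizedBSDE1}, and the maximizer is attained by $r^*_s=\lambda\mathbf{1}_{\{Y^\lambda_s\le S_s\}}$. To make this rigorous I would use the comparison theorem for linear BSDEs: because the penalized driver dominates each controlled driver $f_s+r_s(S_s-\cdot)$, comparison yields $Y^\lambda_t\ge Y^r_t=J_t(r)$ for every admissible $r$, hence $Y^\lambda_t\ge y^\lambda_t$; plugging in $r^*$ makes the two drivers coincide, so $Y^{r^*}_t=Y^\lambda_t$ and thus $y^\lambda_t\ge Y^\lambda_t$, giving equality and identifying the optimal control.

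The main obstacle I anticipate is the measurability and admissibility of the optimizer $r^*_s=\lambda\mathbf{1}_{\{Y^\lambda_s\le S_s\}}$ as a genuine element of $\mathcal{A}(t,\lambda)$, together with justifying that the essential supremum over the uncountable control family is actually achieved rather than merely approached. Both $Y^\lambda$ and $S$ are continuous $\mathbb{F}$-adapted processes, so $\{Y^\lambda_s\le S_s\}$ is progressively measurable and $r^*$ is $\mathbb{F}$-adapted with values in $\{0,\lambda\}$, which resolves admissibility; the attainment of the supremum is then automatic precisely because the comparison inequalities become equalities for this $r^*$, so no limiting argument over controls is needed. A secondary technical point is verifying the integrability conditions guaranteeing that each linear BSDE is well posed and that the comparison theorem applies, but these follow directly from Assumption \ref{Assumption} (square-integrability of $\xi$, $S$, and $f(0,0)$) exactly as in the cited references, so I would treat them as routine.
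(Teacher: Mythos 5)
Your proposal is correct and follows essentially the same route as the paper's proof: representing the payoff $J_t(r)$ as the solution of a linear BSDE with driver $f_s+r_s(S_s-y_s)$, pointwise maximizing over $r_s\in\{0,\lambda\}$ to recover the penalized driver $f_s+\lambda\max\{0,S_s-y_s\}$, and applying the BSDE comparison theorem in both directions with the attaining control $r^*_s=\lambda\mathbf{1}_{\{Y_s^{\lambda}\leq S_s\}}$, with the nonlinear case handled by the frozen-coefficient argument of Section \ref{sec2.2}. Your added remarks on the admissibility of $r^*$ and the attainment of the essential supremum are sound details the paper leaves implicit.
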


\begin{proof}
We only consider the linear case $f_s(y,z)=f_s$. The proof for the
nonlinear case $f_s(y,z)$ is the same as the one in Section
\ref{sec2.2}.

First, similar to Lemma \ref{lemma0}, it is easy to show that the
following expected payoff process associated with any given control
$r\in\mathcal{A}(t,\lambda)$:
$$y^{\lambda}_t(r)=\mathbf{E}
\left[\int_t^{T}(f_s+r_sS_s)e^{-\int_t^{s}r_udu}+e^{-\int_t^{T}r_udu}\xi|\mathcal{F}_t\right]
$$ is the unique solution to the following linear BSDE
$$y_t^{\lambda}(r)=\xi+\int_t^T\left\{f_s+r_s(S_s-y_s^{\lambda}(r))\right\}ds-\int_t^{T}z_s^{\lambda}(r)dW_s.$$

Note that the control $r$ only appears in the driver. For any
control $r\in\mathcal{A}(t,\lambda)$, we have
\begin{align*}
f_s+r_s(S_s-y_s^{\lambda}(r))&\leq
f_s+\lambda\max\{0,S_s-y_s^{\lambda}(r)\},
\end{align*}
and for $r_s=\lambda\mathbf{1}_{\{y_s^{\lambda}(r)\leq S_s\}}$, we
obtain the equality
$$f_s+\lambda\mathbf{1}_{\{y_s^{\lambda}(r)\leq S_s\}}(S_s-y_s^{\lambda}(r))=f_s+\lambda\max\{0,S_s-y_s^{\lambda}(r)\}.$$

By the BSDE comparison theorem (see for example
\cite{ElKaroui19973}), $y_t^{\lambda}(r)\leq Y_t^{\lambda}$ for any
$r\in\mathcal{A}(t,\lambda)$, where $Y^{\lambda}$ is the solution to
the penalized BSDE (\ref{penalizedBSDE1}):
$$Y_t^{\lambda}=\xi+\int_t^T\left\{f_s+\lambda\max\{0,S_s-Y^{\lambda}_s\}\right\}ds-\int_t^{T}Z^{\lambda}_sdW_s.$$
and $y_t^{\lambda}(r^*)= Y_t^{\lambda}$ for
$r^*_s=\lambda\mathbf{1}_{\{Y_s^{\lambda}\leq S_s\}}$. Since
$y_t^{\lambda}=\esssup_{r\in\mathcal{A}(t,\lambda)}y_t^{\lambda}(r)$,
we conclude that $y_t^{\lambda}=Y_t^{\lambda}$ $a.s.$ for
$t\in[0,T]$, and the optimal control is $r^*_s$ for $s\geq t$.
\end{proof}

\begin{remark}
The optimal control representation for the reflected BSDE
(\ref{RBSDE1}) is the same as (\ref{optimalcontrol}) except that the
control set is changed to
$\mathcal{A}(t)=\cup_{\lambda}\mathcal{A}(t,\lambda)$. As shown in
Krylov \cite{Krylov} for the diffusion case, the value of the
following optimal control problem
\begin{equation}\label{optimalcontrol2}
y_t=\esssup_{r\in\mathcal{A}(t)}\mathbf{E}
\left[\int_t^{T}(f_s(Y_s^{\lambda},Z_s^{\lambda})+r_sS_s)e^{-\int_t^{s}r_udu}ds+e^{-\int_t^{T}r_udu}\xi|\mathcal{F}_t\right]
\end{equation}
is given by the solution to the reflected BSDE (\ref{RBSDE1}):
$y_t=Y_t$ $a.s.$ for $t\in[0,T]$.
\end{remark}
\section{Application III: Multidimensional Reflected BSDE and Regime Switching}
\label{Sec_Multi}

Multidimensional reflected BSDE was firstly introduced by Hamad\`ene
and Jeanblanc \cite{Hamadene1}, where they used its solution to
characterize the value of an optimal switching problem, in
particular in the setting of power plant management. The related
equation was solved by Hu and Tang \cite{Hu} using the penalty
method, and by Hamad\`ene and Zhang \cite{Hamadene2} using the
iterated optimal stopping time method. See also Chassagneux et al
\cite{ELIE} for its recent development. A multidimensional reflected
BSDE is a $d$-dimensional system, where each component $1\leq i\leq
d$ representing regime $i$,
\begin{equation}\label{MRBSDE1}
Y_t^i=\xi^i+\int_t^{T}f_s^i(Y_s,Z_s)ds+\int_t^{T}dK_s^i-\int_t^{T}Z_s^idW_s
\end{equation}
under the constraints
\begin{align*}
\text{(Dominating Condition)}:\ \ \ & Y_t^i\geq \mathcal{M}Y_t^{i}\ \text{for}\ t\in[0,T],\\
\text{(Skorohod Condition)}:\ \ \ &
\int_0^T(Y_t^i-\mathcal{M}Y_t^{i})dK_t^i=0\ \text{for}\ K^i\
\text{continuous\ and\ increasing},
\end{align*}
where the impulse term $\mathcal{M}Y_t^i$ is given by
$$\mathcal{M}Y_t^{i}=\max_{j\neq i}\{Y_t^{j}-C_t^{i,j}\}$$
representing the payoff of switching to regime $j$ from regime $i$.
The terminal data $\xi^i$, the driver $f^i_s(y,z)$ and the switching
cost $(C^{i,j}_s)_{0\leq s\leq T}$ are the given data. Different
from one-dimensional reflected BSDE whose solution must stay above
an obstacle process, the solution of the multidimensional reflected
BSDE (\ref{MRBSDE1}) evolves in the random closed and convex set
$$\left\{y\in\mathbb{R}^d: y^i\geq \max_{j\neq i}\{y^j-C_t^{i,j}\}\right\}.$$

The following standard assumption on the data set
$(\xi^i,f^i,C^{i,j})$ is imposed.

\begin{assumption}\label{Assumption2}
\begin{itemize}
\item The terminal data $\xi^i$ and the driver $f_s^i(y,z)$ satisfy
Assumption \ref{Assumption};
\item The switching cost $(C^{ij})_{1\leq i,j\leq d}$ is a bounded
$\mathbb{F}$-adapted process satisfying (i) $C^{ii}_t=0$; (ii)
$\inf_{t\in[0,T]}C_t^{ij}\geq C>0$ for $i\neq j$; and (iii)
$\inf_{t\in[0,T]}C_t^{ij}+C_t^{jl}-C_t^{il}\geq C>0$ for $i\neq
j\neq l$.
\end{itemize}
\end{assumption}

In Hu and Tang \cite{Hu}, they further assume that
$f^i_s(y,z)=f^i_s(y^i,z^i)$ so that (\ref{MRBSDE1}) admits a
solution. This condition was relaxed in Hamad\`ene and Zhang
\cite{Hamadene2} and Chassagneux et al \cite{ELIE}, where the driver
is even allowed to be coupled in $y$, i.e. having the form
$f_s^i(y,z^i)$. However, it is still an open problem for the case of
the fully coupled driver $f^i_s(y,z)$.

Under Assumption \ref{Assumption2} with the decoupled driver
$f^i_s(y,z)=f^i_s(y^i,z^i)$, Hu and Tang \cite{Hu} proved that the
solution to the multidimensional reflected BSDE (\ref{MRBSDE1})
corresponds to the value of an optimal switching problem. Indeed,
introduce the control set $\mathcal{K}_i(t)$ as
$$\mathcal{K}_{i}(t)=
\left\{\mathbb{F}\text{-adapted process\ }(u_s)_{s\geq t}:\
u_s=\alpha_0\mathbf{1}_{[t,\tau_1]}(s)+\sum_{k\geq
1}\alpha_{k}\mathbf{1}_{(\tau_{k},\tau_{k+1}]}(s)\right\},$$ where
\begin{itemize}
\item $(\tau_k)_{k\geq 1}$ is an increasing sequence of
$\mathbb{F}$-stopping times valued in $[t,T]$ with $\tau_{M}\leq T<
\tau_{M+1}$ for some integer-valued random variable $M<\infty$.
\item $(\alpha_k)_{k\geq 0}$ is a sequence of random variables valued in
$\{1,\cdots,d\}$ such that $\alpha_k$ is
$\mathcal{F}_{\tau_k}$-measurable, and $\alpha_0=i$.
\end{itemize}
Then the value of the following optimal switching problem
\begin{equation}\label{optimalswitch1}
y_t^{i}=\esssup_{u\in\mathcal{K}_i(t)}\mathbf{E}\left[
\int_t^{T}f^{u_s}_s(Y_s,Z_s)ds+\xi^{u_T}-\sum_{k\geq
1}C_{\tau_k}^{\alpha_{k-1},\alpha_k}\mathbf{1}_{\{t< \tau_k<
T\}}|\mathcal{F}_t\right]
\end{equation}
is given by the solution to the multidimensional reflected BSDE
(\ref{MRBSDE1}): $y_t^{i}=Y_t^{i}$ $a.s.$ for $ t\in[0,T]$. The
optimal switching strategy is given as follows: $\tau_0^*=t$,
$\alpha_0^{*}=i$ and for $k\geq 0$,
\begin{equation}\label{optimalswitching}
\tau^*_{k+1}=\inf\left\{s> \tau^*_{k}:
Y_s^{\alpha_k^*}\leq\mathcal{M}Y_s^{\alpha_k^*}\right\}\wedge T,
\end{equation}
where
$$\alpha_{k+1}^*=\argmax_{j\neq \alpha_{k}^*}\left\{
Y^{j}_{\tau_{k+1}^{*}}-C^{\alpha_k^*,j}_{\tau_{k+1}^{*}}\right\}.$$
Hence, the optimal switching strategy at any time $s\geq t$ is
$$u^{*}_s=
i\mathbf{1}_{[t,\tau_1^*]}(s)+\sum_{k=1}^{M^*}\alpha_k^{*}
\mathbf{1}_{(\tau_{k}^*,\tau_{k+1}^*]}(s),$$ where $M^*\leq M$ is
some integer-valued random variable such that $\tau^*_{M^*}\leq T<
\tau^*_{M^*+1}$.

On the other hand, Hu and Tang \cite{Hu} introduced the following
multidimensional penalized BSDE to approximate and solve the
multidimensional reflected BSDE (\ref{MRBSDE1}):
\begin{equation}\label{MPBSDE1}
Y_t^{i,\lambda}=\xi^i+\int_t^{T}f_s^i(Y_s^{\lambda},Z_s^{\lambda})ds
+\int_t^T\lambda\max\{0,\mathcal{M}Y_s^{i,\lambda}-Y_s^{i,\lambda}\}ds-\int_t^{T}Z_s^{i,\lambda}dW_s,
\end{equation}
and they proved that under Assumption \ref{Assumption2} with $f^i_s(y,z)=f^i_s(y^i,z^i)$,
$Y^{i,\lambda}$ is increasing in $\lambda$, and
\begin{equation}\label{convergence11}
\lim_{\lambda\uparrow\infty}\mathbf{E}\left[\sup_{t\in[0,T]}|Y_t^{i,\lambda}-Y_t^i|^2+\int_0^T|Z_t^{i,\lambda}-Z_t^{i}|^2dt+
\sup_{t\in[0,T]}|K_t^{i,\lambda}-K_t^i|^2\right]=0.
\end{equation}

However, the solvability of (\ref{MPBSDE1}) does not rely on the
assumption that $f^i_s(y,z)=f^i_s(y^i,z^i)$. Our aim
is therefore to give a stochastic control interpretation of the
multidimensional penalized BSDE (\ref{MPBSDE1}) with the coupled
driver $f^i_s(y,z)$, so we are at least one step closer to solve the
general optimal switching problem with the coupled driver
$f^i_s(y,z)$ is some sense.

Recall that $\{T_n\}_{n\geq 0}$ are the arrival times of the
underlying Poisson process with intensity $\lambda$,
$\mathbb{G}=\{\mathcal{G}_s\}_{s\geq 0}$ with
$\mathcal{G}_s=\mathcal{F}_s\vee\mathcal{H}_s$, and $M<\infty$ is
some integer-valued random variable such that $T_M\leq T< T_{M+1}$.

\begin{proposition}\label{proposition2}
Suppose that Assumption \ref{Assumption2} holds. Denote
$(Y^{i,\lambda},Z^{i,\lambda})$ as the unique solution to the
multidimensional penalized BSDE (\ref{MPBSDE1}). For any integer
$n\geq 1$, conditional on $\{T_{n-1}\leq t<T_{n}\}$, define the
control set $\mathcal{K}_i(\lambda,t)$ as
\begin{align*}
\mathcal{K}_i(t,\lambda)=&\ \left\{\mathbb{G}\text{-adapted\
process}\ (u_s)_{s\geq t}:
u_s=\alpha_{n-1}\mathbf{1}_{[t,\tau_{n}]}(s)+\sum_{k\geq
n}\alpha_k\mathbf{1}_{(\tau_k,\tau_{k+1}]}(s),\right.\\
&\ \ \ \text{where}\ \tau_{k}(\omega)=T_k(\omega)\ \text{for}\ n\leq
k\leq
M+1,\ \text{and}\\
&\ \ \ \left.\alpha_k\in\mathcal{G}_{T_k}\ \text{valued in}\
\{1,\cdots,d\}\ \text{with}\ \alpha_{n-1}=i.\right\}
\end{align*}
Then the value of the following optimal switching problem
\begin{equation}\label{optimalswitch2}
y_t^{i,\lambda}=\esssup_{u\in\mathcal{K}_i(t,\lambda)}\mathbf{E}\left[
\int_t^{T}f^{u_s}_s(Y^{\lambda}_s,Z^{\lambda}_s)ds+\xi^{u_T}-\sum_{k\geq
n}C_{\tau_k}^{\alpha_{k-1},\alpha_k}\mathbf{1}_{\{t< \tau_k<
T\}}|\mathcal{G}_t\right]
\end{equation}
is given by the solution of the multidimensional penalized BSDE
(\ref{MPBSDE1}): $y_t^{i,\lambda}=Y_t^{i,\lambda}$ $a.s.$. The
optimal switching strategy for (\ref{optimalswitch2}) is given as
follows: $\tau_{n-1}^*=t$, $\alpha_{n-1}^*=i$ and for $k\geq n-1$,
\begin{equation}\label{optimalswitching1.1}
\tau^*_{k+1}=\inf\left\{T_N> \tau^*_{k}:
Y_{T_N}^{\alpha_k^*,\lambda}\leq\mathcal{M}Y_{T_N}^{\alpha_k^*,\lambda}\right\}\wedge
T_{M+1},
\end{equation}
where
$$\alpha_{k+1}^*=\argmax_{j\neq \alpha_{k}^*}\left\{
Y^{j,\lambda}_{\tau_{k+1}^{*}}-C^{\alpha_k^*,j}_{\tau_{k+1}^{*}}\right\}.$$
Hence, the optimal switching strategy at any time $s\geq t$ is
$$u^{*}_s=
i\mathbf{1}_{[t,\tau_{n}^*]}(s)+\sum_{k=n}^{M^*}\alpha_k^{*}
\mathbf{1}_{(\tau_{k}^*,\tau_{k+1}^*]}(s),$$ where $M^*\leq M$ is
some integer-valued random variable such that $\tau^*_{M^*}\leq T<
\tau^*_{M^*+1}$.
\end{proposition}

\begin{proof} For any integer $n\geq 1$ and $1\leq i\leq d$, we introduce the following auxiliary optimal
stopping time problem on $\{T_{n-1}\leq t<T_n\}$:
\begin{equation}\label{optimalswitch2.1}
\tilde{y}_{t}^{i,\lambda}=\esssup_{\tau\in\mathcal{R}_{T_{n}}(\lambda)}
\mathbf{E}\left[\int_{t}^{\tau\wedge
T}f_s^i(Y_s^{\lambda},Z_s^{\lambda})ds+\mathcal{M}Y^{i,\lambda}_{\tau}\mathbf{1}_{\{\tau<
T\}}+\xi^i\mathbf{1}_{\{\tau\geq T\}}|\mathcal{G}_{t}\right].
\end{equation}
From Theorem \ref{Theorem} (and Remark \ref{remark}), we know that
its value is given by $\tilde{y}^{i,\lambda}_{t}=Y^{i,\lambda}_{t}$
$a.s.$, and the optimal stopping time is given by
$$\tau^*_{T_{n}}=\inf\left\{T_N\geq T_{n}: Y^{i,\lambda}_{T_N}\leq
\mathcal{M}Y^{i,\lambda}_{T_N}\right\}
\wedge T_{M+1}.$$

Now for any switching strategy $u\in\mathcal{K}_i(t,\lambda)$ with
the form
\begin{equation*}
u_s=i\mathbf{1}_{[t,T_{n}]}(s)+\sum_{k=n
}^{M}\alpha_{k}\mathbf{1}_{(T_{k},T_{k+1}]}(s),
\end{equation*}
we consider the auxiliary optimal stopping problem
(\ref{optimalswitch2.1}) stopping at the Poisson arrival time $T_n$,
and switching to $\alpha_n$,
\begin{equation}\label{auxilequ1}
\tilde{y}^{i,\lambda}_t\geq \mathbf{E}\left[\int_t^{T_n\wedge
T}f_s^i(Y_s^{\lambda},Z_s^{\lambda})ds+(Y^{\alpha_n,\lambda}_{T_n}-C^{i,\alpha_n}_{T_n})
\mathbf{1}_{\{T_n<T\}}+\xi^i\mathbf{1}_{\{T_n\geq
T\}}|\mathcal{G}_t\right].
\end{equation}

From Theorem \ref{Theorem}, $Y^{\alpha_n,\lambda}_{T_n}$ is the
value of the optimal stopping problem (\ref{optimalswitch2.1})
starting from $T_n$. We consider such an optimal stopping problem
stopping at the Poisson arrival time $T_{n+1}$, and switching to
$\alpha_{n+1}$,
\begin{align}\label{auxilequ2}
&Y^{\alpha_n,\lambda}_{T_n}=\tilde{y}_{T_n}^{\alpha_n,\lambda}\\
&\geq \mathbf{E}\left[\int_{T_n}^{T_{n+1}\wedge
T}f_s^{\alpha_n}(Y_s^{\lambda},Z_s^{\lambda})ds+(Y^{\alpha_{n+1},\lambda}_{T_{n+1}}
-C^{\alpha_{n},\alpha_{n+1}}_{T_{n+1}})
\mathbf{1}_{\{T_{n+1}<T\}}+\xi^{\alpha_n}\mathbf{1}_{\{T_{n+1}\geq
T\}}|\mathcal{G}_{T_n}\right].\nonumber
\end{align}
By plugging (\ref{auxilequ2}) into (\ref{auxilequ1}), we have
\begin{align*}
&\tilde{y}_{t}^{i,\lambda}\geq\ \mathbf{E}\left[\int_t^{T_n\wedge
T}f_s^i(Y_s^{\lambda},Z_s^{\lambda})ds+\int_{T_n\wedge
T}^{T_{n+1}\wedge
T}f_s^{\alpha_n}(Y_s^{\lambda},Z_s^{\lambda})ds-C_{T_n}^{i,\alpha_n}\mathbf{1}_{\{T_n<T\}}\right.\\
&\left.-
C_{T_{n+1}}^{\alpha_n,\alpha_{n+1}}\mathbf{1}_{\{T_{n+1}<T\}}+\xi^i\mathbf{1}_{\{T_n\geq
T\}}+\xi^{\alpha_{n}}\mathbf{1}_{\{T_n<T\leq T_{n+1}\}}
+Y_{T_{n+1}}^{\alpha_{n+1},\lambda}\mathbf{1}_{\{T_{n+1}<T\}}|\mathcal{G}_t\right].
\end{align*}

We repeat the above procedure $M$ times, and obtain
\begin{align*}
&\tilde{y}_{t}^{i,\lambda}\geq\mathbf{E}\left[\int_t^{T_n\wedge
T}f_s^i(Y_s^{\lambda},Z_s^{\lambda})ds
+\xi^i\mathbf{1}_{\{T_n\geq T\}}\right.\\
&\left.+\sum_{k=n}^{M}\left(\int_{T_k\wedge T}^{T_{k+1}\wedge
T}f_s^{\alpha_{k}}(Y_s^{\lambda},Z_s^{\lambda})ds-C_{T_{k}}^{\alpha_{k}-1,\alpha_{k}}\mathbf{1}_{\{T_{k}<T\}}
+\xi^{\alpha_k}\mathbf{1}_{\{T_{k}< T\leq
T_{k+1}\}}\right)|\mathcal{G}_t\right].
\end{align*}
Since $T_M\leq T<T_{M+1}$, the above inequality is  further
simplified to
$$\tilde{y}^{i,\lambda}_t\geq\mathbf{E}\left[
\int_t^{T}f^{u_s}_s(Y_s^{\lambda},Z_s^{\lambda})ds+\xi^{u_T}-\sum_{k\geq
n}C_{T_k}^{\alpha_{k-1},\alpha_k}\mathbf{1}_{\{t< T_k<
T\}}|\mathcal{G}_t\right].$$ By taking the supremum over
$u\in\mathcal{K}_i(t,\lambda)$ and using Theorem \ref{Theorem} once
again, we prove that on $\{T_{n-1}\leq t<T_n\}$,
$$Y_t^{i,\lambda}=\tilde{y}^{i,\lambda}_t\geq
y^{i,\lambda}_t.$$

To prove the reverse inequality, we take the switching strategy
$u=u^*$. From Theorem
\ref{Theorem} (and Remark \ref{remark}), $\tau_n^*$ is the optimal
stopping time for (\ref{optimalswitch2.1}). By the definition of
$\alpha_{n}^*$,
$$\mathcal{M}Y^{i,\lambda}_{\tau_n^*}=
\max_{j\neq
i}\{Y_{\tau_n^*}^{j,\lambda}-C_{\tau_n^*}^{i,j}\}=Y^{\alpha_n^*,\lambda}_{\tau_n^*}-
C_{\tau_n^*}^{i,\alpha_{n}^*}.$$ Therefore,
\begin{equation}\label{auxilequ3}
\tilde{y}^{i,\lambda}_t=\mathbf{E}\left[\int_t^{\tau_n^*\wedge
T}f_s^i(Y_s^{\lambda},Z_s^{\lambda})ds+\left(Y^{\alpha_n^*,\lambda}_{\tau_n^*}-C^{i,\alpha_n^*}_{\tau_n^*}\right)
\mathbf{1}_{\{\tau_n^*<T\}}+\xi^i\mathbf{1}_{\{\tau_n^*\geq
T\}}|\mathcal{G}_t\right].
\end{equation}
Similarly, $\tau^*_{n+1}$ is the optimal stopping time for
(\ref{optimalswitch2.1}) starting from $\tau_n^*$, and
$Y_{\tau_n^*}^{\alpha_n^*,\lambda}=\tilde{y}_{\tau_{n}^*}^{\alpha_n^*,\lambda}$.
By the definition of $\alpha_{n+1}^*$,
$$\mathcal{M}Y^{\alpha_n^*,\lambda}_{\tau_{n+1}^*}=
\max_{j\neq
\alpha_n^*}\left\{Y_{\tau_{n+1}^*}^{j,\lambda}-C_{\tau_{n+1}^*}^{\alpha_n^*,j}\right\}=
Y^{\alpha_{n+1}^*,\lambda}_{\tau_{n+1}^*}-C_{\tau_{n+1}^*}^{\alpha_{n}^*,\alpha_{n+1}^*}.$$
Hence,
\begin{align}\label{auxilequ4}
&Y_{\tau_n^*}^{\alpha_n^*,\lambda}=\tilde{y}^{\alpha_n^*,\lambda}_{\tau_n^*}\\
&=\mathbf{E}\left[\int_{\tau_n^*}^{\tau_{n+1}^*\wedge
T}f_s^{\alpha_n^*}(Y_s^{\lambda},Z_s^{\lambda})ds+\left(Y^{\alpha_{n+1}^*,\lambda}_{\tau_{n+1}^*}-
C^{\alpha_n^*,\alpha_{n+1}^*}_{\tau_{n+1}^*}\right)
\mathbf{1}_{\{\tau_{n+1}^*<T\}}+\xi^{\alpha_n^*}\mathbf{1}_{\{\tau_{n+1}^*\geq
T\}}|\mathcal{G}_{\tau_n^*}\right]\nonumber.
\end{align}
Plugging (\ref{auxilequ4}) into (\ref{auxilequ3}) gives us
\begin{align*}
&\tilde{y}_{t}^{i,\lambda}=\mathbf{E}\left[\int_t^{\tau_n^*\wedge
T}f_s^i(Y_s^{\lambda},Z_s^{\lambda})ds+\int_{\tau_n^*\wedge
T}^{\tau^*_{n+1}\wedge
T}f_s^{\alpha_n^*}(Y_s^{\lambda},Z_s^{\lambda})ds-
C_{\tau_n^*}^{i,\alpha_n^*}\mathbf{1}_{\{\tau_n^*<T\}}\right.\\
&\left.-C_{\tau_{n+1}^*}^{\alpha_n^*,\alpha_{n+1}^*}\mathbf{1}_{\{\tau_{n+1}^*<T\}}
+\xi^i\mathbf{1}_{\{\tau_n^*\geq
T\}}+\xi^{\alpha_{n}}\mathbf{1}_{\{\tau_n^*<T\leq \tau_{n+1}^*\}}
+Y_{\tau_{n+1}^*}^{\alpha_{n+1}^*,\lambda}\mathbf{1}_{\{\tau_{n+1}^*<T\}}|\mathcal{G}_t\right].
\end{align*}

We repeat the above procedure $M^*$ times, and obtain
\begin{align*}
&\tilde{y}_{t}^{i,\lambda}=\mathbf{E}\left[\int_t^{\tau^*_n\wedge
T}f_s^i(Y_s^{\lambda},Z_s^{\lambda})ds
+\xi^i\mathbf{1}_{\{\tau^*_n\geq T\}}\right.\\
&\left.+\sum_{k=n}^{M^*}\left(\int_{\tau_k^*\wedge
T}^{\tau^*_{k+1}\wedge
T}f_s^{\alpha_{k}^*}(Y_s^{\lambda},Z_s^{\lambda})ds-C_{\tau^*_{k}}^{\alpha_{k-1}^*,\alpha_{k}^*}\mathbf{1}_{\{\tau^*_{k}<T\}}
+\xi^{\alpha_k^*}\mathbf{1}_{\{\tau^*_{k}<
T\leq \tau_{k+1}^*\}}\right)|\mathcal{G}_t\right]\\
&\ \ \ \ =\mathbf{E}\left[
\int_t^{T}f^{u_s^*}_s(Y_s^{\lambda},Z_s^{\lambda})ds+\xi^{u_T^*}-\sum_{k\geq
n}C_{\tau^*_k}^{\alpha_{k-1}^*,\alpha_k^*}\mathbf{1}_{\{t< \tau_k^*<
T\}}|\mathcal{G}_t\right]\leq y^{i,\lambda}_t.
\end{align*}
Theorem \ref{Theorem} then implies that
$$Y_t^{i,\lambda}=\tilde{y}_t^{i,\lambda}\leq y_t^{i,\lambda},$$
and $u^*$ is the optimal switching strategy.
\end{proof}

\begin{remark}
The optimal switching representation (\ref{optimalswitch2}) of the multidimensional
penalized BSDE (\ref{MPBSDE1}) has a natural economic application to
the menu cost model of Stokey \cite{Stokey}, which allows the
occasional arrival of opportunities to adjust without paying the
fixed cost, and those opportunities are modeled as Poisson arrivals.
See also \cite{LiangWei} for an extension to an infinite horizon BSDE
setting with the analysis of the corresponding free boundaries in the sense of
Ly Vath and Pham \cite{Pham}.
\end{remark}\\

The other commonly used penalization scheme for the multidimensional
reflected BSDE (\ref{MRBSDE1}) is the following equation:
\begin{equation}\label{MPBSDE2}
Y_t^{i,\lambda}=\xi^i+\int_t^{T}f_s^i(Y_s^{\lambda},Z_s^{\lambda})ds
+\int_t^T\sum_{j=1}^d\lambda\max\{0,Y_s^{j,\lambda}-
C_s^{i,j}-Y_s^{i,\lambda}\}ds-\int_t^{T}Z_s^{i,\lambda}dW_s,
\end{equation}
and we still have the convergence (\ref{convergence11}) as shown in
\cite{Hu}.

In the following, we show that (\ref{MPBSDE2}) is closely related to
the BSDE with regime switching on a Markov chain. Regime switching
on Markov chains has been found useful in many applications as shown
in \cite{Zhang1} and \cite{Zhang2}. Its application in BSDE can be
found in a recent work \cite{Wu} among others.

Define a Markov chain $(X_t)_{t\geq 0}$ with state space
$\{1,2,\dots,d\}$, and its $Q$-matrix: $q_{ij}=\lambda$ if $i\neq
j$, and $q_{ij}=-(d-1)\lambda$ if $i=j$. The jump times are denoted
as $\{T_n\}_{n\geq 1}$. At each jump time $T_n$, the player has the
right to choose if switching from the current state or not, and if she
switches, a cost $C_{T_n}^{i,j}$ incurs if the Markov chain jumps
from the state $i$ to $j$.

For any integer $n\geq 1$, conditional on $\{T_{n-1}\leq t<T_n\}$,
define the following control set:
\begin{align*}
\mathcal{K}_i(t,Q)=&\ \left\{\mathbb{G}\text{-adapted\ process}\
(u_s)_{s\geq t}:
u_s=X_{\tau_{n-1}}\mathbf{1}_{[t,\tau_{n}]}(s)+\sum_{k\geq
n}X_{\tau_k}\mathbf{1}_{(\tau_k,\tau_{k+1}]}(s),\right.\\
&\ \ \ \left.\text{where}\ (\tau_{k})_{k\geq n}\ \text{chosen\
from}\ T_N\ \text{for}\ {n\leq N\leq M+1},\ \text{and}\
X_{\tau_{n-1}}=i.\right\}
\end{align*}

\begin{proposition}\label{proposition22}
Suppose that Assumption \ref{Assumption2} holds. Denote
$(Y^{i,\lambda},Z^{i,\lambda})$ as the unique solution to the
multidimensional penalized BSDE (\ref{MPBSDE2}). Then the value of
the following optimal switching problem
\begin{equation}\label{optimalswitch22}
y_t^{i,\lambda}=\esssup_{u\in\mathcal{K}_i(t,Q)}\mathbf{E}\left[
\int_t^{T}f^{u_s}_s(Y^{\lambda}_s,Z^{\lambda}_s)ds+\xi^{u_T}-\sum_{k\geq
n}C_{\tau_k}^{X_{\tau_{k-1}},X_{\tau_k}}\mathbf{1}_{\{t< \tau_k<
T\}}|\mathcal{G}_t\right]
\end{equation}
is given by the solution of the multidimensional penalized BSDE
(\ref{MPBSDE2}): $y_t^{i,\lambda}=Y_t^{i,\lambda}$ $a.s.$. The
optimal switching strategy for (\ref{optimalswitch2}) is given as
follows: $\tau_{n-1}^*=t$, and for $k\geq n-1$,
\begin{equation}\label{optimalswitching2.2}
\tau^*_{k+1}=\inf\left\{T_N> \tau^*_{k}:
Y_{T_N}^{X_{\tau_k^*},\lambda}\leq
Y_{T_N}^{X_{T_N},\lambda}-C_{T_N}^{X_{\tau_k^*},X_{T_N}}\right\}\wedge T_{M+1}.
\end{equation}
Hence, the optimal switching strategy at any time $s\geq t$ is
$$u^{*}_s=
i\mathbf{1}_{[t,\tau_{n}^*]}(s)+\sum_{k=n}^{M^*}X_{\tau_k^{*}}
\mathbf{1}_{(\tau_{k}^*,\tau_{k+1}^*]}(s),$$ where $M^*\leq M$ is
some integer-valued random variable such that $\tau^*_{M^*}\leq T<
\tau^*_{M^*+1}$.
\end{proposition}

\begin{proof}
We first rewrite (\ref{MPBSDE2}) in terms of $q_{ij}$ as follows,
\begin{equation*}
Y_t^{i,\lambda}=\xi^i+\int_t^{T}f_s^i(Y_s^{\lambda},Z_s^{\lambda})ds
+\int_t^T\sum_{j=1}^dq_{ij}\max\{0,Y_s^{j,\lambda}-
C_s^{i,j}-Y_s^{i,\lambda}\}ds-\int_t^{T}Z_s^{i,\lambda}dW_s.
\end{equation*}
Then similar to Lemma \ref{lemma0}, we have that
\begin{align}\label{DPEforBSDE222}
Y_{t}^{i,\lambda}=&\mathbf{E}\left[\int_{t}^{T_{n}\wedge
T}f_s^i(Y_s^{\lambda},Z_s^{\lambda})ds\right.\\
&\left.+\max\left\{Y_{T_n}^{X_{T_n},\lambda}-C_{T_n}^{i,X_{T_n}},
Y^{i,\lambda}_{T_{n}}\right\}
\mathbf{1}_{\{T_{n}\leq T\}}+\xi^i\mathbf{1}_{\{T_{n}>
T\}}|\mathcal{G}_{t}\right].\nonumber
\end{align}
conditional on $\{T_{n-1}\leq t<T_n\}$. From Theorem \ref{Theorem}, $Y_t^{i,\lambda}$
is the value of the following optimal stopping time problem:
\begin{equation}\label{optimalswitch3.1}
\esssup_{\tau\in\mathcal{R}_{T_{n}}(Q)}
\mathbf{E}\left[\int_{t}^{\tau\wedge
T}f_s^i(Y_s^{\lambda},Z_s^{\lambda})ds+\left(Y_{\tau}^{X_{\tau},\lambda}-C_{\tau}^{i,X_{\tau}}\right)\mathbf{1}_{\{\tau<
T\}}+\xi^i\mathbf{1}_{\{\tau\geq T\}}|\mathcal{G}_{t}\right],
\end{equation}
where
$$\mathcal{R}_{T_n}{(Q)}=\left\{\mathbb{G}\text{-stopping\ time}\ \tau\ \text{for}\ \tau(\omega)=T_N(\omega)
\ \text{where}\ n\leq N\leq M+1.\right\}$$
with the optimal stopping time given by
$$\tau^*_{T_{n}}=\inf\left\{T_N\geq T_{n}: Y^{i,\lambda}_{T_N}\leq
Y^{X_{T_N},\lambda}_{T_N}-C_{T_N}^{i,X_{T_N}}\right\}
\wedge T_{M+1}.$$
The rest of the proof is then similar to that of Proposition \ref{proposition2}, so we omit it.
\end{proof}

\begin{remark} If we compare between the optimal switching representations (\ref{optimalswitch2})
and (\ref{optimalswitch22}), the former only allows the player to choose the switching regimes on a sequence of Poisson arrival times, while
the latter only allows the player to choose the switching times with the regimes following a Markov chain.
\end{remark}

\section{Application VI: Constrained Reflected BSDE}\label{Sec_Z}

In Cvitanic et al \cite{Cvitanic}, the authors introduced a new
class of BSDEs with a convex constraint on the \emph{hedging} process
$Z$, and solved the equation using the stochastic control
method\footnote{I would like to thank Ioannis Karatzas for the
suggestion of this section.}. Their equation was further generalized
by Peng \cite{Peng}, and in particular, by Peng and Xu \cite{Xu22}
to reflected BSDE with a general constraint on $Z$
(\emph{constrained reflected BSDE} for short), where the monotonic
limit theorem was introduced in order to show the associated
penalized equation converges to the constrained reflected BSDE. A
constraint reflected BSDE has the form
\begin{equation}\label{constriantBSDE}
Y_t=\xi+\int_t^Tf_s(Y_s,Z_s)ds+\int_t^TdK_s^Y+\int_t^TdK_s^Z-\int_t^TZ_sdW_s
\end{equation}
under the constraints
\begin{align*}
\text{(Dominating Condition)}:\ \ \ & Y_t\geq S_t\ \text{for}\ t\in[0,T],\\
\text{(Skorohod Condition)}:\ \ \ & \int_0^T(Y_t-S_t)dK_t^Y=0\
\text{for}\ K^Y\ \text{continuous and increasing},\\
\text{(Hedging Constraint)}:\ \ \ & Z_t\in\Gamma\ \text{for}\
t\in[0,T].
\end{align*}
The terminal data $\xi$, the driver $f_s(y,z)$, the obstacle
$(S_t)_{0\leq t\leq T}$, and the constraint set
$\Gamma\subset\mathbb{R}^d$ are the given data. A solution to the
constrained reflected BSDE (\ref{constriantBSDE}) is a quadruple of
$\mathbb{F}$-adapted processes $(Y,Z,K^Y,K^Z)$, where $K^Y$ is used
to pushed up the solution $Y$ in order to satisfy the dominating
condition, and $K^Z$ (RCLL and increasing) is used to enforce the
solution $Z$ staying in the constraint set $\Gamma$.

The following standard assumption on the data set $(\xi,f,S,\Gamma)$
is imposed as in Peng and Xu \cite{Xu22}, so (\ref{constriantBSDE})
admits a smallest solution $(Y,Z,K^Y,K^Z)$, in the sense that if
$(\overline{Y},\overline{Z},\overline{K^Y},\overline{K^Z})$ is
another solution to (\ref{constriantBSDE}), then $\overline{Y}_t\geq
Y_t$ $a.s.$ for $t\in[0,T]$.

\begin{assumption}\label{Assumption3}
\begin{itemize}
\item The terminal data $\xi$, the driver $f_s(y,z)$, and the
obstacle $S$ satisfy Assumption \ref{Assumption};
\item The set $\Gamma$ is a closed and convex set in $\mathbb{R}^d$
including the origin;
\item There exists at least one solution $(\overline{Y},\overline{Z},\overline{K^Y},\overline{K^Z})$ to
(\ref{constriantBSDE}).
\end{itemize}
\end{assumption}

When the driver $f_s(y,z)$ is independent of $(y,z)$, denoted as
$f_s$ in such a situation, Cvitanic et al \cite{Cvitanic} gave a
stochastic control representation for the solution of the
constrained reflected BSDE (\ref{constriantBSDE}). Indeed, define
the control set $\mathcal{D}(t)$ as
\begin{align*}
\mathcal{D}(t)=&\bigcup_{m\geq 1}\left\{ \mathbb{F}\text{-adapted\
process}\ (\nu_s)_{s\geq t}: \ \mathbb{H}^2\text{-square
integrable},\ \text{valued\ in}\
\Gamma^*,\right.\\
&\ \ \ \ \ \ \ \left.\text{and}\ |\nu_s|\leq m\ \text{for}\
s\in[t,T]\right\}.
\end{align*}
The valued set $\Gamma^*$ is defined as follows: Given the closed
and convex set $\Gamma$, define its support function
$\delta_{\Gamma}^*(\cdot)$ as the convex dual of the characteristic
function $\delta_{\Gamma}(\cdot)$ of $\Gamma$,
$$\delta_{\Gamma}^*(z)=\sup_{\bar{z}\in\mathbb{R}^d}\left\{\bar{z}\cdot z-\delta_{\Gamma}(\bar{z})\right\},$$
which is bounded on compact subsets of the barrier cone $\Gamma^*$,
$$\Gamma^*=\left\{z\in\mathbb{R}^d: \delta_{\Gamma}^*(z)<\infty\right\}.$$
Given $\nu\in\mathcal{D}(t)$, define an equivalent probability
measure $\mathbf{P}^{\nu}$ as
$$\frac{d\mathbf{P}^{\nu}}{d\mathbf{P}}=\exp\left\{\int_0^{\cdot}\nu_sdW_s-\frac{1}{2}\int_0^{\cdot}|\nu_s|^2ds\right\}.$$
Then the value of the following stochastic control problem
\begin{equation}\label{constraint_representation}
y_t=\esssup_{\tau\in\mathcal{R}(t),\nu\in\mathcal{D}(t)}\mathbf{E}^{\mathbf{P}^{\nu}}
\left[\int_t^{\tau\wedge
T}[f_s-\delta^*_{\Gamma}(\nu_s)]ds+S_{\tau}\mathbf{1}_{\{\tau<T\}}+
\xi\mathbf{1}_{\{\tau\geq T\}}|\mathcal{F}_t\right]
\end{equation}
is given by the solution to the constrained reflected BSDE
(\ref{constriantBSDE}) with the driver $f_s$: $y_t=Y_t$ $a.s.$ for
$t\in[0,T]$.

On the other hand, (\ref{constriantBSDE}) can be solved by
approximating two ``local time'' processes $K^Y$ and $K^Z$ by
$$K^{Y,\lambda}_t=\int_0^t\lambda\max\{0,S_s-Y_s^{(\lambda,m)}\}ds$$
and
$$K^{Z,m}_t=\int_0^tm\times\text{dist}_{\Gamma}(Z^{(\lambda,m)}_s)ds=
\int_0^tm\times\inf_{z\in\Gamma}|z-Z_s^{(\lambda,m)}|ds$$
respectively, where $(Y^{(\lambda,m)},Z^{(\lambda,m)})$ is the
solution of the following constrained penalized BSDE
\begin{align}\label{constriantpendBSDE}
Y^{(\lambda,m)}_t=&\
\xi+\int_t^Tf_s(Y_s^{(\lambda,m)},Z_s^{\lambda,m})
+\lambda\max\{0,S_s-Y_s^{(\lambda,m)}\}+m\times\text{dist}_{\Gamma}(Z^{(\lambda,m)}_s)ds\\
&\ -\int_t^TZ_s^{(\lambda,m)}dW_s.\nonumber
\end{align}
Peng and Xu \cite{Xu22} proved that the solution
$(Y^{(\lambda,m)},Z^{(\lambda,m)},K^{Y,\lambda},K^{Z,m})$ converges
to the smallest solution $(Y,Z,K^Y,K^Z)$ of the constrained
reflected BSDE (\ref{constriantBSDE}) in the sense of monotonic
limit theorem as $\lambda,m\uparrow\infty$.

Our aim in this section is to give a stochastic control
representation of the constrained penalized BSDE
(\ref{constriantpendBSDE}), which has a similar structure to the
stochastic control representation (\ref{constraint_representation}).

\begin{proposition}\label{proposition3}
Suppose that Assumption \ref{Assumption3} holds. Denote
$(Y^{(\lambda,m)},Z^{(\lambda,m)})$ as the unique solution to the
constrained penalized BSDE (\ref{constriantpendBSDE}). For any
$t\in[0,T]$, define the control set $\mathcal{D}(t,m)$ as
\begin{align*}
\mathcal{D}(t,m)=&\left\{ \mathbb{F}\text{-adapted\ process}\
(\nu_s)_{s\geq t}: \ \mathbb{H}^2\text{-square integrable},\
\text{valued\ in}\ \Gamma^*\right.\\
&\left.\text{and}\ |\nu_s|\leq m\ \text{for}\ s\in[t,T]\right\},
\end{align*}
and for any integer $i\geq 1$, the control set
$\mathcal{R}_{T_i}(\lambda)$ as in Theorem \ref{Theorem}. Then
conditional on $\{T_i\leq t<T_i\}$, the value of the following
stochastic control problem
\begin{align}\label{constraint_representation_penality}
y_t^{(\lambda,m)}=&\esssup_{\tau\in\mathcal{R}_{T_i}(\lambda),
\nu\in\mathcal{D}(t,m)}\mathbf{E}^{\mathbf{P}^{\nu}}
\left[\int_t^{\tau\wedge
T}[f_s(Y_s^{(\lambda,m)},Z_s^{(\lambda,m)})-\delta^*_{\Gamma}(\nu_s)]ds\right.\\
&\ \ \ \ \ \ \ \ \ \ \ \ \ \ \ \ \ \ \ \ \ \ \ \ \ \ \ \ \
\left.+S_{\tau}\mathbf{1}_{\{\tau<T\}}+ \xi\mathbf{1}_{\{\tau\geq
T\}}|\mathcal{G}_t\right]\nonumber
\end{align}
is given by the solution to the constrained penalized BSDE
(\ref{constriantpendBSDE}): $y_t^{(\lambda,m)}=Y_t^{(\lambda,m)}$
$a.s.$. The optimal stopping time
$\tau^*_{T_i}\in\mathcal{R}_{T_i}(\lambda)$ is given by
\begin{equation}\label{optimaltime}
\tau^*_{T_i}=\inf\{T_{N}\geq T_i: Y^{(\lambda,m)}_{T_{N}}\leq
S_{T_{N}}\}\wedge T_{M+1},
\end{equation}
and the optimal control $v^*\in\mathcal{D}(t,m)$ is the solution of
the following algebraic equation
\begin{equation}\label{algebraequ0}
m\times\text{dist}_{\Gamma}(Z^{(\lambda,m)}_s)=Z^{(\lambda,m)}_s\cdot
v_s^*-\delta_{\Gamma}^*(v_s^*),\ \text{for}\ a.e.\
(s,\omega)\in[t,T]\times\Omega.
\end{equation}
\end{proposition}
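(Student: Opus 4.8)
The plan is to treat the two penalty terms separately: the reflection penalty $\lambda\max\{0,S_s-Y_s^{(\lambda,m)}\}$ is exactly the penalty already handled by Theorem \ref{Theorem}, which produces the optimal stopping over Poisson arrival times, while the constraint penalty $m\,\text{dist}_\Gamma(Z_s^{(\lambda,m)})$ is to be converted into the optimal control $\nu$ through a Girsanov change of measure combined with the convex duality of Cvitanic et al \cite{Cvitanic}. As in Section \ref{sec2.2}, I would first reduce to the linear case, replacing $f_s(Y_s^{(\lambda,m)},Z_s^{(\lambda,m)})$ by the fixed $\mathbb{H}^2$-adapted process $\phi_s:=f_s(Y_s^{(\lambda,m)},Z_s^{(\lambda,m)})$; the general nonlinear case then follows from the difference-BSDE uniqueness argument of Section \ref{sec2.2}. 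I would also record at the outset that (\ref{constriantpendBSDE}) is itself a penalized BSDE of the form (\ref{penalizedBSDE1}) with driver $g_s(y,z)=f_s(y,z)+m\,\text{dist}_\Gamma(z)$; since $\text{dist}_\Gamma$ is $1$-Lipschitz and $0\in\Gamma$ gives $g_s(0,0)=f_s(0,0)$, the driver $g$ satisfies Assumption \ref{Assumption}, so Theorem \ref{Theorem} is applicable to it.

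The key deterministic ingredient I would establish is the duality
$$m\,\text{dist}_\Gamma(z)=\sup_{\nu\in\Gamma^*,\,|\nu|\leq m}\left\{z\cdot\nu-\delta_\Gamma^*(\nu)\right\},$$
which follows from $\text{dist}_\Gamma(z)=\sup_{|\nu|\leq 1}\{z\cdot\nu-\delta_\Gamma^*(\nu)\}$ and the positive homogeneity of the support function $\delta_\Gamma^*$. For $z\notin\Gamma$ the supremum is attained at $\nu=m(z-\text{proj}_\Gamma(z))/|z-\text{proj}_\Gamma(z)|$, and at $\nu=0$ for $z\in\Gamma$; in either case the maximizer lies in $\Gamma^*$, satisfies $|\nu|\leq m$, and depends measurably on $z$. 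Evaluating this selection at $z=Z_s^{(\lambda,m)}$ produces a control $\nu^*$ solving the algebraic equation (\ref{algebraequ0}); its admissibility $\nu^*\in\mathcal{D}(t,m)$ and the square integrability of the running cost follow from $|\nu^*_s|\leq m$ together with the bound $\delta_\Gamma^*(\nu^*_s)\leq m|\text{proj}_\Gamma(Z_s^{(\lambda,m)})|\leq 2m|Z_s^{(\lambda,m)}|$, the last inequality because $0\in\Gamma$ forces $\text{dist}_\Gamma(z)\leq|z|$.

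Next I would fix $\nu\in\mathcal{D}(t,m)$ and argue as follows. Since $d\mathbf{P}^\nu/d\mathbf{P}$ depends only on $W$ and is bounded (as $|\nu|\leq m$), it is a genuine martingale and, being independent of the Poisson process $N$, leaves the law of $N$ unchanged; thus $N$ remains Poisson$(\lambda)$ under $\mathbf{P}^\nu$ and Theorem \ref{Theorem} (in its linear form, Proposition \ref{Linearcase}) applies verbatim under $\mathbf{P}^\nu$ with driver $\phi_s-\delta_\Gamma^*(\nu_s)$. This yields $\esssup_{\tau\in\mathcal{R}_{T_1}(t,\lambda)}\mathbf{E}^{\mathbf{P}^\nu}[\,\cdots|\mathcal{G}_t^{(t,\lambda)}]=Y_t^\nu$, where $Y^\nu$ solves $Y_t^\nu=\xi+\int_t^T[\phi_s-\delta_\Gamma^*(\nu_s)+\lambda\max\{0,S_s-Y_s^\nu\}]ds-\int_t^TZ_s^\nu dW_s^\nu$ driven by $W_s^\nu=W_s-\int_t^s\nu_udu$. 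Rewriting under $\mathbf{P}$ via $dW_s^\nu=dW_s-\nu_sds$, the driver becomes $\phi_s-\delta_\Gamma^*(\nu_s)+z\cdot\nu_s+\lambda\max\{0,S_s-y\}$, which by the duality above is pointwise dominated by $\phi_s+m\,\text{dist}_\Gamma(z)+\lambda\max\{0,S_s-y\}$, the driver of (\ref{constriantpendBSDE}). The BSDE comparison theorem (see \cite{ElKaroui19973}) then gives $Y_t^\nu\leq Y_t^{(\lambda,m)}$, while for $\nu=\nu^*$ the duality inequality becomes equality along $Z^{(\lambda,m)}$, so that $Y^{(\lambda,m)}$ itself solves the $\nu^*$-BSDE and uniqueness forces $Y_t^{\nu^*}=Y_t^{(\lambda,m)}$. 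Since $\esssup_{\tau,\nu}=\esssup_\nu\esssup_\tau$, I conclude $y_t^{(t,\lambda,m)}=\esssup_\nu Y_t^\nu=Y_t^{(\lambda,m)}$, with optimal control $\nu^*$ and with the optimal stopping time given by Theorem \ref{Theorem} under $\mathbf{P}^{\nu^*}$, namely $\tau^*_{T_1}=\inf\{T_n>t:Y_{T_n}^{(\lambda,m)}\leq S_{T_n}\}\wedge T_{M+1}$, which is (\ref{optimaltime}).

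The change-of-measure computation and the comparison estimate are routine. The step I expect to require the most care is the second paragraph: producing a \emph{genuinely admissible} optimal control $\nu^*\in\mathcal{D}(t,m)$. One must verify that the duality is attained inside the truncated set $\{|\nu|\leq m\}\cap\Gamma^*$ by a measurable selection and that the resulting cost $\delta_\Gamma^*(\nu^*)$ is square integrable, which is precisely where the standing hypotheses $0\in\Gamma$ and $Z^{(\lambda,m)}\in\mathbb{H}^2$ enter; controls $\nu$ with non-integrable cost contribute $-\infty$ and can be discarded without affecting the supremum.
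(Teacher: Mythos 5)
Your proposal is correct and follows essentially the same route as the paper's proof: reduce to the linear driver as in Section \ref{sec2.2}, use the convex duality $m\,\text{dist}_{\Gamma}(z)=\sup_{\nu\in\Gamma^*,\,|\nu|\leq m}\{z\cdot\nu-\delta^*_{\Gamma}(\nu)\}$ (which the paper imports from Lemma 3.1 of \cite{Cvitanic} rather than constructing the maximizer and its measurable selection explicitly, as you do), rewrite the equation under $\mathbf{P}^{\nu}$ and apply the BSDE comparison theorem to get $Y^{(\lambda,m)}_t\geq y^{(t,\lambda,m)}_t$, then verify that the pair $(\tau^*_{T_1},\nu^*)$ attains equality via Theorem \ref{Theorem}. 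One small correction that does not affect the argument: the Girsanov density is not bounded when $|\nu_s|\leq m$ --- it is only a true martingale with moments of all orders by Novikov's criterion --- which still suffices for the change of measure and for $N$ to remain a Poisson process of intensity $\lambda$ under $\mathbf{P}^{\nu}$, the point the paper uses implicitly when invoking Theorem \ref{Theorem} under $\mathbf{P}^{\nu}$.
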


\begin{proof} We only consider the linear case $f_s(y,z)=f_s$, as the
proof for the nonlinear case $f_s(y,z)$ is the same as the one in
Section \ref{sec2.2}.

First, we remark that if $v\in\mathcal{D}(t,m)$, in particular
$|v_s|\leq m$, then the support function $\delta^*_{\Gamma}(v_s)$
has the convex dual representation
$$\delta^*_{\Gamma}(v_s)=\sup_{z\in\mathbb{R}^d}\left\{z\cdot v_s-
m\times\text{dist}_{\Gamma}(z)\right\},\ \text{for}\ a.e.\
(s,\omega)\in[t,T]\times\Omega.$$ See Lemma 3.1 in \cite{Cvitanic}
for the proof. Intuitively, it means that we use
$m\times\text{dist}_{\Gamma}(\cdot)$ to approximate the
characteristic function $\delta_{\Gamma}(\cdot)$. Moreover, as shown
in \cite{Cvitanic}, since $m\times\text{dist}_{\Gamma}(\cdot)$ is
convex,
$$m\times\text{dist}_{\Gamma}(Z_s^{(\lambda,m)})
=\sup_{\nu\in\mathcal{D}(t,m)}\{Z_s^{(\lambda,m)}\cdot\nu_s-\delta_{\Gamma}^*(\nu_s)\},\
\text{for}\ a.e.\ (s,\omega)\in[t,T]\times\Omega,$$ and there exists
$v^*\in\mathcal{D}(t,m)$ solving the algebraic equation
(\ref{algebraequ0}).

Now for any control $\nu\in\mathcal{D}(t,m)$, we rewrite
(\ref{constriantpendBSDE}) as
\begin{align}
Y_t^{(\lambda,m)}=&\ \xi+ \int_t^{
T}\left[f_s+\lambda\max\{0,S_s-Y_s^{(\lambda,m)}\}-\delta_{\Gamma}^*(\nu_s)+Z^{(\lambda,m)}_s\cdot
\nu_s\right]ds\\
&\ +\int_t^{
T}\left[m\times\text{dist}_{\Gamma}(Z^{(\lambda,m)}_s)-Z^{(\lambda,m)}_s\cdot
\nu_s+\delta^*_{\Gamma}(\nu_s)\right]ds-
\int_t^{T}Z^{(\lambda,m)}_sdW_s.\nonumber
\end{align}

Since
$$m\times\text{dist}_{\Gamma}(Z^{(\lambda,m)}_s)-Z^{(\lambda,m)}_s\cdot
\nu_s+\delta^*_{\Gamma}(\nu_s)\geq 0$$ for any
$\nu\in\mathcal{D}(t,m)$, from the BSDE comparison theorem,
$Y_t^{(\lambda,m)}\geq Y_t^{(\lambda,m)}(\nu)$, where
$Y^{(\lambda,m)}(\nu)$ is the solution of the following BSDE
\begin{align*}
Y_t^{(\lambda,m)}(\nu)=&\ \xi+ \int_t^{
T}\left[f_s+\lambda\max\{0,S_s-Y_s^{(\lambda,m)}(\nu)\}-\delta_{\Gamma}^*(\nu_s)+Z^{(\lambda,m)}_s(\nu)\cdot
\nu_s\right]ds\\
&-\int_t^{T}Z^{(\lambda,m)}_s(\nu)dW_s,
\end{align*}
or equivalently, under the probability measure $\mathbf{P}^{\nu}$,
\begin{equation*}
Y_t^{(\lambda,m)}(\nu)=\xi+ \int_t^{
T}\left[f_s+\lambda\max\{0,S_s-Y_s^{(\lambda,m)}(\nu)\}-\delta_{\Gamma}^*(\nu_s)\right]ds-
\int_t^{T}Z^{(\lambda,m)}_s(\nu)dW_s^{\nu},
\end{equation*}
where $W^{\nu}_s=W_s-\int_0^sv_udu$ for $s\geq 0$ is the Brownian
motion under the probability measure $\mathbf{P}^{\nu}$.

From Theorem \ref{Theorem}, we know that conditional on
$\{T_{i-1}\leq t<T_i\}$, $Y_t^{(\lambda,m)}(\nu)\geq
y_t^{(\lambda,m)}(\tau,\nu)$ for any stopping time
$\tau\in\mathcal{R}_{T_i}(\lambda)$, where
\begin{equation}\label{linear1}
y_t^{(\lambda,m)}(\tau,\nu)=\mathbf{E}^{\mathbf{P}^{\nu}}
\left[\int_t^{\tau\wedge
T}[f_s-\delta^*_{\Gamma}(\nu_s)]ds+S_{\tau}\mathbf{1}_{\{\tau<T\}}+
\xi\mathbf{1}_{\{\tau\geq T\}}|\mathcal{G}_t\right].
\end{equation}
Hence, $Y_t^{(\lambda,m)}\geq y_t^{(\lambda,m)}(\tau,\nu)$. Taking
the supremum over $\tau\in\mathcal{R}_{T_i}(\lambda)$ and
$\nu\in\mathcal{D}(t,m)$ gives us $Y_t^{(\lambda,m)}\geq
y_t^{(\lambda,m)}$ on $\{T_{i-1}\leq t<T_i\}$.

Next, we choose $\nu=\nu^*$ and $\tau=\tau^*_{T_i}$ to get the
reverse inequality. Indeed, for $v^*$ solving (\ref{algebraequ0}),
$Y_t^{(\lambda,m)}=Y_t^{(\lambda,m)}(\nu^*)$. Moreover, if we choose
$\nu=\nu^*$ and $\tau=\tau^*_{T_i}$, we get
$$\tau^*_{T_i}=\inf\{T_{n}\geq T_i: Y^{(\lambda,m)}_{T_{n}}\leq
S_{T_{n}}\}\wedge T_{M+1}=\inf\{T_{n}\geq T_i:
Y^{(\lambda,m)}_{T_{n}}(\nu^*)\leq S_{T_{n}}\}\wedge T_{M+1}.$$ From
Theorem \ref{Theorem},
$Y_t^{(\lambda,m)}(\nu^*)=y_t^{(\lambda,m)}(\tau^*_{T_i},\nu^*)\leq
y_t^{(\lambda,m)}$. Therefore, $Y_t^{(\lambda,m)}=y_t^{(\lambda,m)}$
on $\{T_{i-1}\leq t<T_i\}$, and $(\nu^*,\tau^*_{T_i})$ are the
optimal control and optimal stopping time of
(\ref{constraint_representation_penality}) respectively.
\end{proof}

\section{Conclusion}

In this paper, we find the stochastic control representations of
(multidimensional, constrained) reflected BSDEs and associated
penalized BSDEs, which are summarized in the following table. The
main feature of the related optimal stopping representation is that
the player only stops at arrival times of some exogenous Poisson
process.

\begin{table}[[!htb]\small\caption{\small Stochastic Control Representations of
Reflected BSDEs and Penalized BSDEs}
\begin{center}
\begin{tabular}{ccc}
\hline\hline
&Stochastic control representations\\
\hline
Reflected BSDE& (\ref{OptimalStopping1}) with $\tau\in\mathcal{R}(t)$/(\ref{optimalcontrol2}) with $r\in\mathcal{A}(t)$\\
Penalized BSDE& (\ref{OptimalStopping2}) with $\tau\in\mathcal{R}_{T_i}(\lambda)$/ (\ref{optimalcontrol}) with $r\in\mathcal{A}(t,\lambda)$\\
Multidimensional Reflected BSDE& (\ref{optimalswitch1}) with $u\in\mathcal{K}_i(t)$\\
Multidimensional Penalized BSDE&(\ref{optimalswitch2}) with $u\in\mathcal{K}_i(t,\lambda)$/(\ref{optimalswitch22}) with $u\in\mathcal{K}_{i}(t,Q)$\\
Constrained Reflected BSDE & (\ref{constraint_representation}) with $\tau\in\mathcal{R}(t)$ and $\nu\in\mathcal{D}(t)$\\
Constrained Penalized BSDE &
(\ref{constraint_representation_penality}) with
$\tau\in\mathcal{R}_{T_i}(t,\lambda)$ and $\nu\in\mathcal{D}(t,m)$\\
\hline
\end{tabular}
\end{center}
\end{table}

Finally, it seems that the only existing representation result for
penalized BSDE was given by Lepeltier and Xu in \cite{Xu1} and
\cite{Xu2}\footnote{I would like to thank Mingyu Xu for providing me
with these two references.}, where they found a connection between
penalized BSDE and a standard optimal stopping problem with modified
obstacle $\min\{S_t,Y^{\lambda}_t\}$. Our represent results are
different, and seem more natural: Penalized BSDE is nothing but a
random time discretization of the optimal stopping representation
for the corresponding reflected BSDE, where the time is discretized
by Poisson arrival times.

\small
\section*{Acknowledgments}
The author would like to thank the editor Prof.Qing Zhang, an associate editor,
and a referee for their valuable comments and suggestions on the manuscript, and for
their patient handling of the paper. The author is grateful to  Erhan Bayraktar, David Hobson, Ying Jiao, Ioannis
Karatzas, Shige Peng and Mingyu Xu for helpful discussions, and
especially to Sam Cohen for pointing out a mistake in an early
version. The author also thanks participants in seminars at
University of Michigan, Oxford and Warwick, and at the First Asian
Quantitative Finance Conference (Singapore, January 2013), the Risk
and Stochastics Conference, (LSE, May 2013), the Workshop on New
Development in Stochastic Analysis: Probability and PDE
interactions, (Beijing, July 2013), and the INFORMS Annual Meeting
2013, (Minneapolis, October 2013).

\end{document}